\numberwithin{equation}{section}
\newtheorem{thm}{Theorem}
\newtheorem{corollary}{Corollary}				
\newtheorem{proposition}{Proposition}
\newtheorem{definition}{Definition}
\newtheorem{assumption}{Assumption}
\newtheorem{remark}{Remark}
\newcommand{\change}[1]{#1}
\title{Kalikow decomposition for counting processes with stochastic intensity  \change{and application to simulation algorithms}} 
\author{Tien Cuong Phi\thanks{Universit\'e C\^ote d'Azur, CNRS, LJAD, France. Email: cuong.tienphi@gmail.com }, Eva L\"ocherbach\thanks{Universit\'e Paris 1 Panth\'eon-Sorbonne, Statistique, Analyse et Mod\'elisation Multidisciplinaire EA 4543 et FR FP2M 2036 CNRS, France. Email: eva.locherbach@univ-paris1.fr}, Patricia Reynaud-Bouret\thanks{Universit\'e C\^ote d'Azur, CNRS, LJAD, France. Email: Patricia.Reynaud-Bouret@univ-cotedazur.fr } }
\begin{document}

\maketitle

\begin{abstract}
We propose a new Kalikow decomposition 
for continuous time multivariate counting processes, on potentially infinite networks. We prove the existence of such a decomposition in various cases. This decomposition \change{allows us to derive simulation algorithms that hold either for stationary processes  with potentially infinite network but bounded intensities, or for processes with unbounded intensities in a finite network and with empty past before 0. The Kalikow decomposition is not unique and we discuss the choice of the decomposition in terms of algorithmic efficiency in certain cases.} We apply these methods on several examples:  linear Hawkes process, age dependent Hawkes process, exponential Hawkes process, Galves-Löcherbach process.
\end{abstract}

{\bf Keywords:} Kalikow decomposition; counting process; Hawkes process; Perfect Simulation; Simulation algorithms.

{\bf MSC 2010 subject classification:} 60G55, 60K35 

\maketitle

\section{Introduction} \label{sec:1}

Multivariate point (or counting) processes on networks have been used to model a large variety of situations: social networks \cite{HW16}, financial prices \cite{BHDM}, genomics \cite{RS10}, etc. One of the most complex network models comes from neuroscience where the number of nodes can be as large as billions \cite{PMR, MMR, SMMPR}. Several counting process models have been used to model such large networks: Hawkes processes \cite{Hawkes71a, Hawkes71b}, Galves-L\"ocherbach models \cite{GL2013} etc. \change{ The simulation of such large and potentially infinite networks is of fundamental importance in computational neuroscience \cite{MMR, PMR}. From a more mathematical point of view, } the existence of such processes in stationary regime and within a potentially infinite network draws \change{also} a lot of interest (see \cite{GL2013, ORB, HL} in discrete or continuous time).

Kalikow decompositions \cite{Kal90} have been introduced and mainly used in discrete time. Such a decomposition provides a decomposition of the  transition probabilities into a mixture of more elementary transitions. The whole idea is that even if the process is complex (infinite memory, infinite network), the elementary transitions look only at what happens in a finite neighborhood in time and space. Once the decomposition is proved for a given process, this can be used to write \change{algorithms to simulate the process. Indeed,  by the Kalikow decomposition, the process can be decomposed in random elementary transitions that do not need to access to the whole information to move forward in time.  This is useful in two ways. First one can simulate the points appearing for  a given node in the network, without needing the whole past history or  even the whole network. This leads to} Perfect Simulation algorithms \cite{GL2013, HL, ORB}. Here the word ``perfect" refers to the fact that it is possible in finite time to simulate what happens on one or on a finite number of the nodes of the potentially infinite network in a stationary regime. \change{Secondly, this decomposition can drastically reduce the time complexity of the simulation algorithms, because we do not need to store and/or compute at each step what happens in the whole network to proceed.} 
\change{Up to our knowledge,} all existing papers referring to Kalikow decomposition 
are theoretical \change{and focus on the first aim: indeed if one can prove that such a Perfect simulation algorithm ends after a  finite number of steps, 
it means at the same time that the process exists in a stationary regime. This is of tremendous theoretical importance when dealing with  infinite networks} \cite{HL, GL2013}. 

In the present paper, we propose to go from discrete to continuous time. Therefore, we 
decompose conditional intensities rather than transition probabilities. This leads to serious difficulties that usually prevent a more practical application of the 
\change{simulation algorithms}. Indeed, up to our knowledge, the only work dealing with continuous time counting processes is the one by Hodara and L\"ocherbach \cite{HL}. Their decomposition is constructed under the assumption that there is a dominating Poisson process on each of the nodes, from which the points of the processes under interest can be thinned by rejection sampling (see also \cite{Ogata} for another use of thinning in simulation of counting processes). To prove the existence of a Kalikow decomposition and go back to a more classical discrete time setting, the authors need to freeze the dominating Poisson process, leading to a mixture, in the Kalikow decomposition, that depends on the realization of the dominating Poisson process. Such a mixture is not accessible in practice, and this prevents the use of their Perfect Simulation algorithm for more concrete purposes than mere existence.

More recently, in a previous computational article \cite{PMR}, we have used another type of Kalikow decomposition, which does not depend on the dominating Poisson process. This leads to a Perfect Simulation algorithm, which can be used  as a concrete way for Computational Neuroscience to simulate neuronal networks as an open physical system, where we do not need to simulate the whole network to simulate what happens in a small part of it \cite{PMR}. \change{However, this approach was mainly done in direction of computer science and the definition of the Kalikow decomposition was not sufficiently broad to encompass the classical counting process examples such as classical linear Hawkes processes.}

In the present work, we want to go further, by proposing a Kalikow decomposition in a general frame, which does not assume the existence of a dominating Poisson process at all. We also prove (and this is not done in \cite{PMR})  that such a decomposition exists for various interesting examples, even if it is not unique. Finally we propose \change{two algorithms. The first one is essentially the one that is proposed in \cite{PMR}, except that it is now proved to work for the more general definition of the Kalikow decomposition that we have introduced here. This is a Perfect Simulation algorithm in the sense that it can simulate the point process of a given node in a potentially infinite network in a stationary regime (it moves backward to create the past that is needed to obtain a stationary process). To do so, we need to assume the existence of a dominating Poisson measure. The second algorithm only moves forward from an empty past (meaning no  points before 0) in a finite network and therefore simulates outside the stationary regime. In this case, the intensity does not need to be bounded, as in the more classical Ogata's algorithm \cite{Ogata}.}

The paper is organized as follows. In Section \ref{sec:2}, we introduce the basic notation and give the precise definition of a Kalikow decomposition. In Section \ref{sec:3}, we present \change{a very general method}  to obtain a Kalikow decomposition for a counting process having stochastic intensity. \change{We decline this method on various examples including the linear Hawkes process \cite{Hawkes71a, Hawkes71b}, the age dependent Hawkes process \cite{RDL}, more general non-linear Hawkes processes with analytic rate function and Galves-L\"ocherbach models \cite{GL2013}.}

Finally, in Section \ref{sec:4}, we present \change{the two algorithms} 
based on the Kalikow decomposition written in Section \ref{sec:3}, and we discuss the efficiency of the \change{Perfect simulation} algorithm with respect to the Kalikow decomposition.

\section{Notation and Kalikow decomposition} \label{sec:2}
\subsection{Notation and Definition}
We start this section by recalling the definition of simple locally finite counting processes and stochastic intensities. We refer the reader to \cite{Bre81} and \cite{DVJ} for more complete statements.

Let $\textbf{I}$ be a countable index set. \change{We start by introducing a canonical path space for the sequence of points of a counting process having interacting components indexed by $ i \in \bf{I}. $ This space is given by
$$\mathcal{X}_\change{\infty}= \left\{  (\{ t^i_n \}_{n \in \mathds{Z}} )_{i \in \textbf{I}} \mbox{ s.t. } \forall i\neq j \in {\bf I}, n, m  \in \mathds{Z}, \left\{ \begin{array}{ll}
t^i_n \in [- \infty, \infty ] & \\
\lim_{ n \to \pm \infty} t^i_n = \pm \infty &\\
 t^i_n < t^i_{n+1} \mbox{ and }t^i_n \neq t^j_m &   \mbox{if they are not infinite}\\
\end{array}
\right.
\right\} .
$$}
\change{Notice that we allow for the choices $t^i_n = \pm \infty $ such that elements of $ \mathcal{X}_\infty$ may have only a finite number of (finite) points  before time $0$ or only a finite number of  (finite) points after time $0, $ or both. In the sequel, whenever we speak of the points of a counting process, we implicitly mean finite points. } 

\change{We then introduce, for any $ t \in \mathds{R} $, $ i \in \bf{I}$ and $x=(\{ t^i_n \}_{n \in \mathds{Z}} )_{i \in \textbf{I}}$,
$$ Z^i_t (x) = \sum_{ n \geq 1} {\bf 1}_{ t_n^i \le t } , \; \mbox{ if }  t \geq 0, \mbox{ and } 
 Z^i_t (x) = - \sum_{ n \leq 0 } {\bf 1}_{ t < t_n^i },  \;  \mbox{ if } t \le 0 ,$$  
and we write for short $ Z=(Z^i)_{ i \in \bf{I} }$ for the associated collection of counting processes, indexed by $ i \in \bf{I}.$ Note that  $Z \in D ( \mathds{R}, \mathds{Z})^{\bf I } $,  where  $D ( \mathds{R}, \mathds{Z})$ is the space of non decreasing c\`adl\`ag piecewise constant functions, see \cite{JS}.} 
We denote $\mathcal{X}_{t} $ the canonical path space of points of $Z$ before time $t,$ given by
\begin{equation*}
\mathcal{X}_{t} = \mathcal{X}_\change{\infty} \cap (-\infty ,t)^{\bf I},
\end{equation*}
and \change{we identify $ (Z_s)_{s <  t } $  with the 
 past configuration $X_t \in \mathcal{X}_t$}   defined by 
\begin{equation}\label{eq:pastconfig}
 X_t ( x) =  (\{ t^i_n \}_{ t^i_n < t } )_{i \in \textbf{I}} .
\end{equation} 
We consider $(\mathcal{F}_{t})_{t \in \mathds{R}},$ the past filtration of the process $Z = (Z^i)_{i \in \textbf{I}},$ defined by
$$
\mathcal{F}_{t} = \sigma (Z^i_s, i \in \textbf{I}, s \leq t) .
$$
Moreover, for any $x =  \left(\{t^i_n\}_{n \in \mathds{Z_{-}}} \right)_{i \in \textbf{I}} \in \mathcal{X}$ and any $i \in \textbf{I}$, we denote the point measure associated to index $i$ by
\begin{equation*}
dx^i_s = \sum_{m \in \mathds{Z}^{-}} \delta_{t^i_m}(ds)
\end{equation*}
\change{
which is an element of the space $ \mathcal{N}$ of locally finite point measures on $\mathds{R}_-,$ endowed with the topology of vague convergence,
and we endow $ \mathcal{X} $ with the metric induced by the product metric on $ \mathcal{N}^{\textbf{I}}.$} 
Finally, throughout this article, without further mentioning, the integral $\int_{a}^{b}$  stands for $\int_{[a,b)}$ with $a,b \in \mathds{R}$ and $Z^i([a,b))$ ($Z^i ( (a, b ]), $ respectively) stands for the number of points in the configuration with index $i$ in $[a,b)$ (in $(a, b ] , $ respectively).

\change{In the present article, we are only interested by time homogeneous counting processes, that is, informally, processes that at each time  $t$ depend in the same way on the past configuration $x_t$. To be more rigorous, let us define,
for each $x_t = \left(\{t^i_n\}_{t^i_n < t } \right)_{i \in \textbf{I}} \in \mathcal{X}_{t}$, $$x_t^{\leftarrow t}:= \left( \{ t^i_n - t \}_{n} \right)_{i},$$
which is the shifted configuration at time $0$. The generic space for such a past configuration $x_t^{\leftarrow t}$ that is rooted at time 0 is denoted $\mathcal{X}:=\mathcal{X}_{0}  $. 
}

Under suitable assumptions, the evolution of the counting process $Z = (Z^i)_{i \in \textbf{I}}$ with respect to $(\mathcal{F}_{t})_{t \in \mathds{R}}$ is fully characterized by its stochastic intensity which depends on the past configuration, see Proposition 7.2.IV of \cite{DVJ}. Hence, in this paper, for any $x_t \in \mathcal{X}_t$, given that the past before time $t$ is $x_t$, we denote by $\phi^i_{t}(x_t)$ the corresponding stochastic intensity of the process $Z^i$ at time $t$ for any $i \in \textbf{I}$. 
More precisely, for any  $x_t \in \mathcal{X}_t$, we have
$$
\mathds{P}\left(Z^i \, \text{has a jump in} \, [t, t+dt) \mid \text{past before time} \, t = x_t \right) = \phi^i_{t}(x_t) dt .
$$

\begin{definition} \label{time homogen}
For a given $i \in \textbf{I}$, a counting process $Z^i$ with stochastic intensity $(\phi^i_{t}(x_t))_{t \in \mathds{R}}$ is said to be time homogeneous \change{if there exists a measurable function $\phi^i :\mathcal{X} \to \mathds{R}_+ $, called the {\bf generic intensity,} such that}
\begin{equation*}
\phi^i_t(x_t) = \phi^{i}(x_t^{\leftarrow t})
\end{equation*}
for all $t \in \mathds{R}$ and $x_t \in \mathcal{X}_t$.
\end{definition}

\change{Note that a process that is time homogeneous is not necessarily stationary. For instance, exponential Hawkes Processes \cite{Cars} (see Section \ref{sec_nonlin} for more details) starting with empty past before time $0$ (that is empty past history before time $0$, or in other words, no points before $0$) may explode in finite time, but they are still time homogeneous in the sense of the previous definition. One can also think of simple linear Hawkes processes with empty past before time $0$ in the supercritical regime, that is,  for which the interaction function has $L^1-$norm larger than $1$ and thus produces an exponentially growing number of points as time increases \cite{BHDM}. Therefore, if the process of interest is stationary, one can think of $\mathcal{X}$ as $\mathcal{X}_0$ the set of configurations at time 0. However if this is not the case, $\mathcal{X}$ has to be thought as just a generic space whose configurations are cut at time 0 (that is no points exists after 0) and which is the set on which $\phi^i$ is defined.}

\change{To define the Kalikow decomposition, we need to define neighborhoods and cylindrical functions on neighborhoods. A neighborhood $v$ is a Borel subset of $\textbf{I} \times (-\infty, 0)$. This neighborhood is said to be finite if there exists a finite subset $J \subset \textbf{I}$ and a finite interval $[a, b]$ such that:
$$
v \subset J \times [a,b].
$$

\begin{definition}
For any neighborhood $v$ and $x,y\in \mathcal{X}$, we say $x\overset{v}{=} y$ whenever $x= y$ in $v$. This means that, for all $i \in \textbf{I} , n \in \mathds{Z}$, such that $t^i_n \in x$ and $(i,t^i_n) \in v$, we have $t^i_n \in y$ and vice-versa. 

A real valued function $f$ is said cylindrical in $v$ if $f(x) = f(y)$ for any $x\overset{v}{=} y$, and we usually stress the dependence in $v$ by denoting $f_{v}(x)$.
\end{definition}

A family of  neighborhoods ${\bf V}$ is a countable collection  of finite neighborhoods $v$. It usually includes the empty set $\emptyset$. One might have a different family of neighborhood for each $i$, even if in several examples, the same family works for all $i$.}

\change{
\begin{definition} \label{def:article-2/uncoditionalKalikow}
A time homogeneous counting process $(Z^i)_{i \in {\bf I}}$ of generic intensity $\phi^i$  admits the Kalikow decomposition with respect to (w.r.t.) the collection of neighborhood families $({\bf V}^i)_{i \in {\bf I}}$ and a given subspace $\mathcal{Y}$ of $\mathcal{X}$ if, for any $i \in {\bf I}$ and any  $v \in \bf{V}^i$ there exists a cylindrical function $\phi^i_{v}(.)$ on $v$ taking values in $\mathds{R}_{+}$ and a probability $\lambda^i(.)$  on ${\bf V}^i$ such that

\begin{equation} \label{Kalikow_decomposition}
\forall x \in \mathcal{X} \cap \mathcal{Y} \qquad  \phi^i(x) = \sum\limits_{v \in \bf{V}^i}\lambda^i(v)\phi^i_{v}(x).
\end{equation}
\end{definition}}

\begin{remark}
Note that the \change{probability} $\lambda^i(.)$ in Definition \ref{def:article-2/uncoditionalKalikow} is a deterministic function, that is why this decomposition is unconditional, whereas in \cite{HL}, $\lambda^i(.)$ was depending on the dominating Poisson processes (see the discussion in the introduction). Secondly, we do not restrict ourself to a bounded intensity, \change{and we do not force all the $\phi^i_v$ to be bounded with the same bound}, which is a notable improvement compared to \cite{PMR}.
\end{remark}

\subsection{About the subspace $\mathcal{Y}$} \label{subspace}

\change{
A Kalikow decomposition does not exist for all intensities and all subspaces $\mathcal{Y},$ and we stress the fact that it depends on the choice of $\mathcal{Y}.$
The role of $\mathcal{Y}$ is to make the Kalikow decomposition achievable. There are many possible choices for such a subspace, depending on the model under consideration and the precise form of the intensity.  In this paper, we will discuss two main examples. The first example is the choice  $\mathcal{Y} = \mathcal{X}^{> \delta}$ which is the subspace of $\mathcal{X}$ where the distance between any two consecutive possible points is greater than $\delta,$ that is, 
\begin{equation} \label{Xdelta}
\mathcal{X}^{> \delta} = \{ x= (\{t^i_n\}_{n \in \mathds{Z}^{-}})_{i \in \textbf{I}} \in \mathcal{X} \quad \text{such that} \quad \forall n,i  \quad t^i_{n+1} - t^i_{n} >\delta  \}.
\end{equation}
Such a choice is convenient for counting processes with a hard exclusion role where by definition of the intensity, any two consecutive points need to be at a distance at least equal to  $\delta,$ see Section \ref{hard AHP} below where we discuss the example of Age dependent Hawkes processes with hard refractory period. 

In the case when $ \phi^i $ is continuous for each $i $ and we want to simulate the process starting from the empty past before time $0,$ during some finite time interval $ [0, T ] $ and up to some activity level $ K > 0, $ another possible choice of a subspace $\mathcal{Y}$ that we consider is 
$$ \mathcal{Y} = \mathcal{X}^{T, K } = \{x \in \mathcal{X}, \, \forall i \in {\bf{I}}, Z^i_T (x)  \le K \}.$$
By continuity of $ \phi^i, $   it is clear that the intensities are bounded on $ \mathcal{X}^{T, K }  .$ In this case, we will be able to simulate the process, using the Kalikow decomposition,  up to the first exit time of $\mathcal{X}^{T, K }, $ see Section \ref{Sec:forward} below.}

\subsection{Representation, thinning and simulation \label{thinning}} 

\change{Locally finite simple counting processes with intensity $\phi^i_t(x_t)$ can always be represented as thinning of a bivariate Poisson measure. More precisely, if $(\pi^i)_{i\in {\bf I}}$ are independent Poisson random measures on $\mathds{R}\times\mathds{R}_+$ with intensity 1, the point measures defined by
$$dX^i_t= \int_{u\geq 0} {\bf 1}_{u\leq \phi^i_t(X_t)} \pi^i(dt,du)$$
define points of  counting processes $(Z^i)_{i\in {\bf I}}$ having intensity $\phi^i_t(x_t)$ at time $t,$ given that the past before time $t$ is $x_t,$ see e.g. Lemma 3 and 4 of \cite{BM96} and Chapter 14 of \cite{Jacod}. 

This representation has been used for a long time to simulate  processes forward in time, see e.g. \cite{Ogata} and \cite{Lewis}. More precisely, consider for the moment the easy case where we have empty past before time $0$ and intensities which are bounded for any $ i $ by a fixed constant $\Gamma^i > 0 .$ The simulation of the Poisson random measure $\pi^i$ then consists in a homogeneous Poisson process, $N^i$, in time, having intensity $\Gamma^i$ (that can also be built by a succession of independent exponential jumps of parameter $\Gamma^i$), and then to attach to each point $T$ of this process, independently of anything else, independent marks $U_T$ which are uniformly distributed on $[0,\Gamma^i]$. The couples $(T,U_T)$ for $T\in N^i$ form  the bivariate Poisson random measure  $\pi^i$ in the band of height $\Gamma^i$.

The classical thinning algorithm -- in case of intensities which are bounded by $ \Gamma^i$-- then consists in saying that the points of $N^i$ are accepted if $U_T\leq  \phi^i_T(X_T)$  and that these accepted points correspond to a counting process of intensity $\phi^i_t(X_t)$. Equivalently, one can attach independent uniform marks $\mathcal{U}_T$ on $[0,1]$ and say that we accept $T$ if $\mathcal{U}_T \leq \phi^i_T(X_T)/\Gamma^i$, or one can even just say that we accept a point  $T$ in $N^i$ with probability $\phi^i_T(X_T)/\Gamma^i$.

The Kalikow decomposition allows to go one step further thanks to the following result.}

\begin{proposition}\label{thinn_kali}
\change{Let  $({\bf V}^i)_{i\in {\bf I}}$ be a collection of families of finite neighborhoods and for any $ i \in \bf{I}, $  $(\lambda^i(v))_{v\in {\bf V}^i}$ be probabilities on ${\bf V}^i$ and $\phi^i_v$ be cylindrical functions on $v$ for each $v\in {\bf V}^i.$ Let moreover $(\Pi^i)_{i\in {\bf I}}$ be  independent Poisson measures on $\mathds{R}\times\mathds{R}_+\times {\bf V}^i$ with intensity measure $dt~du~ \lambda^i(dv)$.

Then for every $i \in {\bf I}$, 
$$dZ^i_t= \int_{u\geq 0, v\in {\bf V}^i} {\bf 1}_{u\leq \phi^i_{v}(X_t^{\leftarrow t})} \Pi^i(dt,du,dv)$$
defines a time homogeneous counting process $Z^i$ having generic intensity given by
\begin{equation}\label{lasomme}
\phi^i= \sum\limits_{v \in \bf{V}^i}\lambda^i(v)\phi^i_{v}.
\end{equation}}
\end{proposition}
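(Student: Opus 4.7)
The strategy is to view the right-hand side as a predictable thinning of the marked Poisson measure $\Pi^i$ and then to identify the resulting compensator, which characterizes the stochastic intensity.

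First, I would check well-posedness. Let $(\mathcal{G}_t)_t$ denote the filtration generated by the restrictions of $(\Pi^j)_{j \in \bf{I}}$ to $(-\infty, t]$; then $X_t^{\leftarrow t}$ is built from the finitely many points of the $(Z^j)_j$ strictly before $t$, and each $\phi^i_v$ is cylindrical on a finite neighborhood $v$. Hence the map $(t, u, v) \mapsto \mathbf{1}_{u \leq \phi^i_v(X_t^{\leftarrow t})}$ is left-continuous in $t$, and therefore $(\mathcal{G}_t)_t$-predictable. Local finiteness of the resulting counting measure on any bounded interval follows from Campbell's formula applied to $\Pi^i$, provided $\phi^i$ itself is locally integrable along the trajectory.

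Second, I would apply the standard martingale characterization for Poisson-driven integrals (see Lemmas 3--4 of \cite{BM96} and Chapter 14 of \cite{Jacod}): for a predictable non-negative integrand $H$ and a Poisson measure with intensity $dt\, du\, \lambda^i(dv)$, the process $\int_0^\cdot \int H\, d\Pi^i$ admits $\int_0^\cdot \int H\, dt\, du\, \lambda^i(dv)$ as compensator. Specializing to our integrand and integrating first over $u$ and then against $\lambda^i$ yields
\begin{equation*}
\int_0^t \int_{u \geq 0} \int_{v \in \bf{V}^i} \mathbf{1}_{u \leq \phi^i_v(X_s^{\leftarrow s})} \, \lambda^i(dv)\, du\, ds = \int_0^t \sum_{v \in \bf{V}^i} \lambda^i(v)\, \phi^i_v(X_s^{\leftarrow s})\, ds = \int_0^t \phi^i(X_s^{\leftarrow s})\, ds,
\end{equation*}
where the last step uses \eqref{lasomme}. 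This identifies $\phi^i(X_s^{\leftarrow s})$ as the stochastic intensity of $Z^i$ at time $s$; since it depends on the past only through the shifted configuration $X_s^{\leftarrow s}$, Definition \ref{time homogen} applies and $Z^i$ is time homogeneous with generic intensity $\phi^i$.

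The main delicate point is conceptual rather than computational: one must ensure that the integrand is predictable with respect to the right filtration, so that the martingale characterization of the compensator applies. This is straightforward here because $X_t^{\leftarrow t}$ involves only points strictly before $t$ and each $\phi^i_v$ is cylindrical on a finite set of coordinates, but it is worth spelling out. A secondary point is that the construction only makes sense where $\phi^i$ is finite, i.e. on the subspace $\mathcal{Y}$ of Definition \ref{def:article-2/uncoditionalKalikow}; if needed, the construction should be understood to run up to the first exit time from that domain.
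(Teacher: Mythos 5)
Your proposal is correct and follows essentially the same route as the paper: both identify the intensity by replacing $\Pi^i$ with its intensity measure $dt\,du\,\lambda^i(dv)$ and integrating out first $v$ against $\lambda^i$ and then $u$ against Lebesgue measure to recover $\sum_{v}\lambda^i(v)\phi^i_v(X_s^{\leftarrow s})$. Your additional remarks on predictability of the integrand and on the domain where $\phi^i$ is finite are sound refinements of the same argument rather than a different approach.
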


 \change{From a more algorithmic point of view, the construction of $\Pi^i$ is equivalent to attaching to each atom $(T,U_T)$ of $\pi^i$ (the bivariate Poisson random measure) a mark $V_T$, independently of everything else and distributed according to $\lambda^i$.
The above theoretical result can be interpreted in the following way. It is sufficient to draw at random the neighborhood $V_T$ for each point $T$ according to $\lambda^i$ and to do as if the intensity was just $\phi^i_{V_T}(X_t^{\leftarrow t})$ instead of having to compute the full sum in \eqref{lasomme}.

From a computational point of view, the main interest of this is to diminish drastically the number of computations to be done, replacing the summation by a random selection. From a theoretical point of view, the above representation enables us to define reset events at which the process forgets its past, at least in a local way. These resets take place whenever, for instance, the empty set is picked as a neighborhood, which happens with probability $\lambda^i ( \emptyset ), $ for a given $ i \in \bf I.$ Indeed, this means that at this time $t$, the history of the $i-$th component is reset and becomes independent of the past. In a nutshell, the whole theoretical interest of using the Kalikow decomposition for Perfect simulation is therefore to prove that such resets happen often enough and for sufficiently many coordinates $i, $ and that one can simulate  the distribution between resets, without needing to simulate outside of this zone. The interest of this strategy is twofold. First of all, if this procedure works, this shows that the stationary distribution exists and is unique (see Theorem \ref{theo:stationary} below). Moreover, it also allows to perfectly simulate from this stationary distribution. } 

\change{
The above representation needs a tridimensional Poisson random measure. As we have seen above, if it is easy to simulate such a measure within a band, we cannot simulate it without having an upper bound on the intensity. That is why we distinguish two cases for the algorithms. Here is a brief informal overview of them.
\begin{itemize}
\item If we seek for a stationary distribution, we need to ``propose" a first point before thinning it in a ``Kalikow'' way. To do so, we need a fixed upper bound, say $\Gamma^i,$ that holds for all times, for a given coordinate $i.$ Then we will be able to recursively go back in time as follows:  (i) pick the random neighborhood (ii) simulate the points in the  neighborhood according to a Poisson process of intensity $\Gamma^j$ if the neighborhood is on node $j$ (iii) search again for the neighborhoods of the points that we just simulated and go on. In this backward step, we create a clan of ancestors for the first point. Under some conditions, this recursion ends in finite time, because either the empty neighborhood is picked or because the simulation of the Poisson process inside the neighborhood is empty. Hence the status of the first point, even inside the stationary distribution,  only depends on a finite set of points that we have been able to create. It remains to accept or reject, in a forward movement, all these points  according to the rule $U_T\leq \phi^i_{V_T}(x_T^{\leftarrow T}).$ See Section \ref{sec:41} below. 
\item If there is no bound, which is typically the case of the linear or exponential Hawkes process, one cannot ``propose" a first point in a stationary manner. However, if we start with an empty past before $0,$ the intensity at time $0$ is usually bounded and we can ``propose" a first point. The main strategy is now to update this upper bound as time goes by and to change the size of the steps we are making, one point after the other. This algorithm can only move in forward time and one needs to know all the network to make it move forward. Hence this approach does only work for finite networks with known past (empty past before time $0,$ typically). See Section \ref{Sec:forward} below.
\end{itemize}

Note that in particular for linear (multivariate) Hawkes processes, there is another way to do Perfect Simulation, relying on the cluster representation of these processes, see \cite{MR}, \cite{Chen} and \cite{ChenWang}.  However the approach we propose here is much more general, because we do not need this cluster representation, but rely on the Kalikow decomposition which exists for a broader class of processes (for instance non-linear Hawkes processes with hard refractory period and Lipschitz rate function, see Section \ref{hard AHP} below).}

The above proposition holds of course as long as  $\sum\limits_{v \in \bf{V}^i}\lambda^i(v)\phi^i_{v}$ exists and it furnishes a way to simulate a process with intensity $\phi^i$. In the sequel, we will see how we combine it with a proper definition of $\mathcal{Y}$ to make the simulation algorithms work in practice.

\change{\begin{proof}[Proof of Proposition \ref{thinn_kali}]
To avoid confusion, we consider two filtrations. We write  $(\mathcal{F}_t)_{t\in \mathds{R}}$ for the canonical filtration of $(\Pi^i)_{i\in {\bf I}}$, containing the filtration $(\mathcal{F}^Z_t)_{t\in \mathds{R}}$ which corresponds to the one given by the counting processes $(Z^i)_{i\in {\bf I}}$.

To prove that $(Z^i)_{i\in {\bf I}}$ has the correct intensity, let us look at $\mathds{E}\left(Z^i ( (a, b ] |  \mathcal{F}^Z_a \right),$
for any $ a < b .$ We have that 
$$\mathds{E} \left(Z^i ( (a, b ] |  \mathcal{F}^Z_a \right) =\mathds{E}\left(\int_{s \in (a,b],u\geq 0, v\in {\bf V}^i} {\bf 1}_{u\leq \phi^i_{v}(X_s^{\leftarrow s})} \Pi^i(ds,du,dv)|  \mathcal{F}^Z_a \right).$$
The integral in $v$ is independent from the rest, so we can integrate it and replace it by its corresponding intensity measure, which leads to
$$\mathds{E} \left(Z^i ( (a, b ] |  \mathcal{F}^Z_a \right) =\mathds{E}\left(\sum_{v\in {\bf V}^i} \lambda^i(v) \int_{s \in (a,b],u\geq 0} {\bf 1}_{u\leq \phi^i_{v}(X_s^{\leftarrow s})} \pi^i(ds,du)|  \mathcal{F}^Z_a  \right),$$
where $\pi^i$ is the bivariate Poisson random measure of rate 1. 
Therefore we get
$$\mathds{E} \left(Z^i ( (a, b ] |  \mathcal{F}^Z_a \right)=\sum_{v\in {\bf V}^i} \lambda^i(v)  \mathds{E}\left( \int_{s \in(a,b]}  \phi^i_{v}(X_s^{\leftarrow s})ds |  \mathcal{F}^Z_a \right),$$
that is 
$$\mathds{E} \left(Z^i ( (a, b ] |  \mathcal{F}^Z_a \right)= \mathds{E}\left( \int_{s \in(a,b]} \phi^i_{s}(X_s)ds |  \mathcal{F}^Z_a \right) ,$$
which concludes the proof.
\end{proof}}

\section{\change{How to compute the Kalikow decomposition in various cases}} \label{sec:3}
\change{To obtain the Kalikow decomposition, we try to rewrite the intensity as a convergent sum of cylindrical functions over an adequate family of neighborhoods.
To define properly such a family of neighborhoods, let us start by introducing the minimal information that is needed to compute the intensity.}

\change{Let us consider a time homogeneous counting process $Z^i$ having intensity $\phi^i_t(x_t) = \phi^{i}(x_t^{\leftarrow t}).$  Let  $\mathcal{V}^i \subset {\bf I}\times (-\infty,0)$ be minimal such that $\phi^i$ is cylindrical on $\mathcal{V}^{ i}.$ We interpret $\mathcal{V}^i$ as the support of dependance for $\phi^i$. }

\begin{definition}
\change{ A coherent family of finite neighborhoods ${\bf V}^i$  for $Z^i$  is such that
\begin{equation} \label{Neighborhood Family}
\mathcal{V}^i \subset \bigcup_{v\in {\bf V}^i} v.
\end{equation}}
\end{definition}
\change{
\begin{proposition}\label{prop:article-2/MethodToWriteKal}
Let  $Z=(Z^i)_{i\in {\bf I}}$ be a time homogeneous counting process having intensity  $\phi^i_t(x_t) = \phi^{i}(x_t^{\leftarrow t})$ at time $t, $ let ${\bf V}^i$ be an associated coherent family of neighborhoods and $\mathcal{Y} \subset \mathcal{X}$ a subspace of $ \mathcal{X} .$  If there exists a family of non-negative functions $\Delta^i_v(x)$ which are cylindrical on $v$ for each $v\in{\bf V}^i$ and ${i\in{\bf I}}$ such that
\begin{equation}\label{eq:convergent}
 \forall x \in \mathcal{X} \cap \mathcal{Y} , \; \sum_{v \in {\bf V}^i} \Delta^i_{v}(x) \mbox{ is convergent  and } \; \phi^{i}(x) = \sum_{v \in {\bf V}^i} \Delta^i_{v}(x),
\end{equation} 
then
\begin{enumerate}
\item whatever the  choice of the probabilities $\lambda^i$ on ${\bf V}^i$, such that 
$$\lambda^i(v)=0\quad \mbox{only if} \quad \sup_{x\in \mathcal{X} \cap \mathcal{Y}}  \Delta^i_{v}(x)=0 ,$$ 
then $(Z^i)_{i \in {\bf I}}$  admits the Kalikow decomposition with respect to the collection of neighborhood families $({\bf V}^i)_{i \in {\bf I}}$ and the subspace $\mathcal{Y},$ with weights given by  $\lambda^i(v)$ and cylindrical functions $\phi^i_v,$ where 
$$\phi^i_v (x)=  \Delta^i_{v}(x)/\lambda^i(v),$$ 
with the convention $0/0=0$.
\item if for all $i\in {\bf I}$ and $v \in {\bf V}^i$, there exist nonnegative deterministic constants $\Gamma^i_v$ such that 
$$ \sup_{x\in \mathcal{X} \cap \mathcal{Y}}  \Delta^i_{v}(x) \leq  \Gamma^i_v <\infty$$
and $\Gamma^i=\sum_{v\in {\bf V}^i} \Gamma^i_v\neq 0$ is finite,
then one can in particular choose
\begin{equation}\label{eq:choicelambda}
  \forall i\in {\bf I}, \forall v \in {\bf V}^i, \quad \lambda^i(v)=   \frac{\Gamma^i_v}{\Gamma^i} \quad \mbox{and} \quad \phi^i_v (x)=  \frac{\Gamma^i}{\Gamma^i_v} \Delta^i_{v}(x).
\end{equation}  
In this case, all functions $\phi^i_v$ and $\phi^i$ are  upperbounded by $\Gamma^i.$

\end{enumerate}
\end{proposition}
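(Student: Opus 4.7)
The plan is to verify directly that the proposed $ \phi^i_v $ satisfy the three requirements of Definition \ref{def:article-2/uncoditionalKalikow}: they are cylindrical on $v$, nonnegative, and together with the weights $ \lambda^i(v) $ they recover $ \phi^i $ on $ \mathcal{X} \cap \mathcal{Y} $. Cylindricity and nonnegativity will be immediate from the assumption on $ \Delta^i_v $, so the heart of the argument is just to check the summation identity \eqref{Kalikow_decomposition}.

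For Part 1, I would define $ \phi^i_v(x) = \Delta^i_v(x)/\lambda^i(v) $ with convention $0/0 = 0$, then split the sum $ \sum_{v \in {\bf V}^i} \lambda^i(v) \phi^i_v(x) $ into two pieces: the terms where $ \lambda^i(v) > 0 $ and the terms where $ \lambda^i(v) = 0 $. In the first case we obviously get back $ \Delta^i_v(x) $. In the second case, the hypothesis $ \lambda^i(v) = 0 \Rightarrow \sup_{x\in \mathcal{X}\cap \mathcal{Y}} \Delta^i_v(x) = 0 $ forces $ \Delta^i_v(x) = 0 $ for the $ x $ under consideration, and the convention $ 0/0 = 0 $ then also yields $ \lambda^i(v) \phi^i_v(x) = 0 = \Delta^i_v(x) $. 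Adding everything back and using \eqref{eq:convergent} gives $ \sum_v \lambda^i(v)\phi^i_v(x) = \sum_v \Delta^i_v(x) = \phi^i(x) $. The only delicate point here is making sure that this re-summation is legitimate; since all terms are nonnegative and the total sum is assumed convergent, Fubini/Tonelli causes no problem.

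For Part 2, I would first check that $ \lambda^i(v) := \Gamma^i_v/\Gamma^i $ is a bona fide probability on $ {\bf V}^i $, which follows from $ \Gamma^i = \sum_v \Gamma^i_v $ being finite and nonzero. Next, I would verify the hypothesis of Part 1 for this particular choice: $ \lambda^i(v) = 0 $ iff $ \Gamma^i_v = 0 $, and since $ 0 \le \Delta^i_v(x) \le \Gamma^i_v $ on $ \mathcal{X} \cap \mathcal{Y} $, this forces $ \sup_{\mathcal{X}\cap \mathcal{Y}} \Delta^i_v = 0 $ as required. Part 1 then applies and gives the Kalikow decomposition with $ \phi^i_v(x) = (\Gamma^i/\Gamma^i_v) \Delta^i_v(x) $. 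The boundedness statement is a one-line computation: $ \phi^i_v(x) \le (\Gamma^i/\Gamma^i_v) \Gamma^i_v = \Gamma^i $ whenever $ \Gamma^i_v > 0 $ (and $ \phi^i_v \equiv 0 $ otherwise), while $ \phi^i(x) = \sum_v \Delta^i_v(x) \le \sum_v \Gamma^i_v = \Gamma^i $.

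Overall I do not anticipate any real obstacle; the proposition is essentially a bookkeeping statement turning any pointwise-convergent nonnegative decomposition of $ \phi^i $ into a probabilistic mixture. The only place where one has to be a bit careful is the treatment of the degenerate indices $v$ with $ \lambda^i(v) = 0 $, which motivates the $ 0/0 = 0 $ convention and the assumption relating the zeros of $ \lambda^i $ to the zeros of $ \sup \Delta^i_v $.
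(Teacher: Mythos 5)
Your proof is correct and follows exactly the route the paper intends: the paper's own proof simply declares Item 1 obvious and notes that the normalization $\lambda^i(v)=\Gamma^i_v/\Gamma^i$ defines a probability, while you spell out the bookkeeping (the split over $\lambda^i(v)>0$ versus $\lambda^i(v)=0$ and the boundedness computation) that the paper leaves implicit.
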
}

\begin{proof}[Proof]
\change{The first point is obvious. Note that for the second one, since $\sum_{v\in {\bf V}^i}  \Gamma^i_v =\Gamma^i$, the choice $\lambda^i(v)=   \Gamma^i_v/\Gamma^i$ defines a probability.}

\end{proof}

\change{The weights $\lambda^i (v) $ in the Kalikow decomposition are not unique, and this even in the bounded case. Indeed, $\Gamma^i_v$ can always be chosen much bigger than $ \sup_{x\in \mathcal{X} \cap \mathcal{Y}}  \Delta^i_{v}(x)$. If we order the neighborhoods from the most simple (the empty set) to the most complex (by size, or by distance to $(i,0)$), this means that it is always possible to choose large weights $\lambda^i(v)$  on complex neighborhoods, but not that easy to choose small weights on them. However, we would like  the backward steps of the Perfect simulation algorithm to end after a  small finite number of iterations. Hence we typically  need larger weights on the empty set and on the less complex neighborhoods such that less computations are done or less memory is stored in our algorithms. 
This means that one can try to optimize the weights to  minimize the complexity of the algorithms that we develop (see Section \ref{sec:choice}). 

Note also that the above result leads directly to a very generic statement for continuous generic intensities.

\begin{corollary}
Let  $Z=(Z^i)_{i\in {\bf I}}$ be a time homogeneous counting process with generic intensity  $\phi^i$, let ${\bf V}^i = \{ v^i_k, k \in \mathds{N}\} $ be an associated coherent  family of neighborhoods which is increasing, that is, $ v^i_k \subset v^i_{k+1} $ for all $k, $  and let  $\mathcal{Y}$ be a subspace of $ \mathcal{X} ,$ such that for any $ i \in \bf{I}, $ $ \phi^i $ is strongly continuous on $ \mathcal{Y} ,$ that is,  
\begin{equation}\label{eq:strongcont}
\sup_{ i \in \bf{I} } \sup_{ x, y \in \mathcal{Y} : x\overset{v^i_k}{=} y} | \phi^i( x) - \phi^i(y) | \to 0 
\end{equation}
as $ k \to \infty .$ 

Then Item 1. of Proposition \ref{prop:article-2/MethodToWriteKal} holds with the choice 
$$\left\{
\begin{array}{ll}
\Delta^i_{v^i_0} (x)=\inf \{ \phi^i (y) : y \in \mathcal{Y}, \; y\overset{v^i_0}{=} x \}  & 
\\
\Delta^i_{v^i_k} (x) = \inf \{ \phi^i (y) : y \in \mathcal{Y}, \; y\overset{v^i_k}{=} x \} -  \inf \{ \phi^i (y) : y \in \mathcal{Y}, \; y\overset{v^i_{k-1}}{=} x \}  &\mbox{for} \quad k >0 . \\
\end{array}
\right. 
$$ 
\end{corollary}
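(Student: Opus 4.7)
The plan is to reduce the verification of the three conditions in Item 1 of Proposition \ref{prop:article-2/MethodToWriteKal} — cylindricity on $v^i_k$, non-negativity, and the pointwise identity with convergent partial sums — to a telescoping argument, with the strong continuity hypothesis used only once, in the very last step, to identify the limit.

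First I would set, for each $ i \in \bf{I} $ and $ k \in \mathds{N} $,
$$ a^i_k (x) = \inf \{ \phi^i (y) : y \in \mathcal{Y}, \; y \overset{v^i_k}{=} x \}, $$
so that $\Delta^i_{v^i_0} = a^i_0$ and $ \Delta^i_{v^i_k} = a^i_k - a^i_{k-1} $ for $ k \geq 1 $. The cylindricity of $a^i_k$ on $v^i_k$ is immediate from the definition: the defining condition $ y \overset{v^i_k}{=} x $ depends on $x$ only through its restriction to $v^i_k$, so the infimum does too. Since $ v^i_{k-1} \subset v^i_k $, any $ y $ with $ y \overset{v^i_k}{=} x $ also satisfies $ y \overset{v^i_{k-1}}{=} x $, so the feasible set for $a^i_k(x)$ is contained in that for $a^i_{k-1}(x)$; hence $ a^i_k (x) \geq a^i_{k-1} (x) $, which proves that $ \Delta^i_{v^i_k}(x) \geq 0 $. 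The cylindricity of $\Delta^i_{v^i_k}$ on $v^i_k$ then follows, since $a^i_{k-1}$ is cylindrical on $v^i_{k-1} \subset v^i_k$.

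The partial sums telescope: for every $N \geq 0$ and every $ x \in \mathcal{X} \cap \mathcal{Y} $,
$$ \sum_{k=0}^N \Delta^i_{v^i_k} (x) = a^i_N(x). $$
Taking $y = x$ in the infimum (which is admissible since $x \in \mathcal{Y}$) gives $a^i_N(x) \leq \phi^i(x)$. For the reverse direction, I would apply the strong continuity assumption \eqref{eq:strongcont}: given $\varepsilon > 0$, pick $N_0$ such that for all $N \geq N_0$, every $y \in \mathcal{Y}$ with $y \overset{v^i_N}{=} x$ satisfies $|\phi^i(y) - \phi^i(x)| \leq \varepsilon$; this yields $\phi^i(y) \geq \phi^i(x) - \varepsilon$ and therefore $a^i_N(x) \geq \phi^i(x) - \varepsilon$. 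Combining both bounds, $a^i_N(x) \to \phi^i(x)$, which proves both the convergence of $\sum_{k} \Delta^i_{v^i_k}(x)$ and the identity $\phi^i(x) = \sum_{k} \Delta^i_{v^i_k}(x)$. At this point all hypotheses of Item 1 of Proposition \ref{prop:article-2/MethodToWriteKal} are verified, and the corollary follows.

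The only delicate point is the last step: monotone telescoping alone does not guarantee that the limit is $\phi^i(x)$ rather than some strictly smaller quantity, and this is precisely where the strong continuity \eqref{eq:strongcont} is indispensable. Everything else is bookkeeping on the ordering $v^i_{k-1} \subset v^i_k$ and on the definition of cylindricity; no bound on the $\Delta^i_{v^i_k}$ is needed here, which is why we end up in the setting of Item 1 rather than Item 2 of Proposition \ref{prop:article-2/MethodToWriteKal}.
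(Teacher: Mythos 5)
Your proof is correct and is exactly the argument the paper intends: the paper states this corollary as a direct consequence of Proposition \ref{prop:article-2/MethodToWriteKal} without writing out the details, and your telescoping of the infima $a^i_k$, the monotonicity from $v^i_{k-1}\subset v^i_k$, and the use of strong continuity \eqref{eq:strongcont} only to identify the limit of $a^i_N(x)$ as $\phi^i(x)$ fill in precisely the omitted steps. Nothing is missing.
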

}

\change{
Let us now see how this method can be implemented on various more specific examples of processes and various examples of neighborhood families.
}

\subsection{Linear Hawkes process.}
In the following, we consider a linear Hawkes process \cite{Hawkes71a, Hawkes71b}, for a finite number of interacting components, that is, $ \bf I$ finite. In this framework, for any $x \in \mathcal{X}$,
\begin{equation}\label{def_lin-Haw}
\phi^{i}(x)= \mu^i + \sum_{j \in \textbf{I}} \int_{-\infty}^{0} h^i_j(-s) dx^j_s ,
\end{equation}
where \change{the non negative interaction functions } $h^i_j(.)$ measure the local dependence of process $Z^i$ on $Z^j$ and the \change{non negative parameters $\mu^i$ } refer to the spontaneous rate of process $Z^i$. \change{The classical assumption to have a stationary Hawkes process is to assume that the spectral radius  of $(\int_0^{\infty} h^i_{j}(s) ds)_{i,j \in {\bf I}}$ is strictly smaller than 1. We do not need to make this assumption here, we just assume that for any $ i, j , $ $h^i_j \in L^1_{loc}.$ In this case we have
\begin{equation}\label{Vrondi}
\mathcal{V}^i= \bigcup_{j\in {\bf I}: h^j_i \neq 0} \{j\}\times Supp(h^i_j) .
\end{equation}
Cutting the support of $h^i_j$ into small pieces of length $\epsilon,$ for some fixed $ \epsilon > 0,$ we are led to consider atomic neighborhoods of the type 
\begin{equation}\label{atomic}
w_{j,n}=\{j\}\times[-n\epsilon, -(n-1)\epsilon)
\end{equation}
which are supported by one single neuron within a small interval of length $\epsilon$.
The most generic  family of neighborhoods that we can consider in this sense is
 $${\bf V}_{atom}=\{\emptyset\} \cup \{w_{j,n} :  j\in {\bf I}, n \in \mathds{N}^*\}$$ 
 that we choose for all $i$. For this family, one can prove the following result, as a straightforward corollary of Proposition \ref{prop:article-2/MethodToWriteKal}.

\begin{corollary}\label{linHaw_kali}
The multivariate linear Hawkes process defined by \eqref{def_lin-Haw} admits the Kalikow decomposition given by Proposition \ref{prop:article-2/MethodToWriteKal} Item 1., with respect to  the collection of neighborhood families $({\bf V}^i)_{i \in {\bf I}}$ where for all $i$, ${\bf V}^i={\bf V}_{atom},$ and with respect to the subspace $\mathcal{Y}=\{x \in \mathcal{X} : \sum_{i \in \bf I}  \phi^i(x)<\infty\},$ with 
\begin{itemize}
\item if $v=w_{j,n}$,
$$\Delta^i_{w_{j,n}}(x):= \int_{-n \epsilon}^{- n \epsilon + \epsilon}h^i_j(-s)dx^{j}_s$$
\item if $v=\emptyset$,
$$\Delta^i_\emptyset=\mu^i.$$
\end{itemize}
\end{corollary}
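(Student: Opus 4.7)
The plan is to verify that the hypotheses of Proposition \ref{prop:article-2/MethodToWriteKal}, Item 1, hold for the specified family ${\bf V}^i = {\bf V}_{atom}$, the subspace $\mathcal{Y}$, and the functions $\Delta^i_v$ defined in the statement. Three things need to be checked: (i) coherence of the family, (ii) non-negativity and cylindrical character of each $\Delta^i_v$, and (iii) convergence of the sum $\sum_v \Delta^i_v(x) = \phi^i(x)$ on $\mathcal{X} \cap \mathcal{Y}$.

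For (i), I would observe that the atoms $w_{j,n} = \{j\} \times [-n \epsilon, -(n-1)\epsilon)$, for $j \in {\bf I}$ and $n \in \mathds{N}^*$, form a partition of ${\bf I} \times (-\infty, 0)$ into finite neighborhoods. In particular, $\mathcal{V}^i$ as given by \eqref{Vrondi} is trivially contained in their union, so ${\bf V}_{atom}$ is a coherent family for every $i$. Countability follows from the finiteness of ${\bf I}$. For (ii), I would simply note that $\mu^i \geq 0$ and that $\int_{-n\epsilon}^{-(n-1)\epsilon} h^i_j(-s) dx^j_s \geq 0$ since $h^i_j \geq 0$; moreover this integral is entirely determined by the points of $x^j$ lying in $[-n\epsilon, -(n-1)\epsilon)$, hence $\Delta^i_{w_{j,n}}$ is cylindrical on $w_{j,n}$. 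The term $\Delta^i_\emptyset = \mu^i$ is a non-negative constant, and a constant is cylindrical on $\emptyset$.

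The main step is (iii). Since all summands are non-negative and ${\bf I}$ is finite, Tonelli's theorem allows me to rearrange freely:
\begin{equation*}
\sum_{v \in {\bf V}_{atom}} \Delta^i_v(x) = \mu^i + \sum_{j \in {\bf I}} \sum_{n \geq 1} \int_{-n\epsilon}^{-(n-1)\epsilon} h^i_j(-s)\, dx^j_s = \mu^i + \sum_{j \in {\bf I}} \int_{-\infty}^{0} h^i_j(-s)\, dx^j_s = \phi^i(x),
\end{equation*}
where I used that the intervals $[-n\epsilon, -(n-1)\epsilon)$ partition $(-\infty, 0)$. For $x \in \mathcal{X} \cap \mathcal{Y}$ we have $\phi^i(x) < \infty$ by the very definition of $\mathcal{Y}$, so the sum is convergent. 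With the three points established, condition \eqref{eq:convergent} holds and Proposition \ref{prop:article-2/MethodToWriteKal}, Item 1, applies directly, yielding the announced Kalikow decomposition for any admissible choice of probabilities $\lambda^i$ on ${\bf V}_{atom}$.

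I do not anticipate any substantive obstacle: the statement is essentially a direct specialization of the general framework, with the only noteworthy point being that restricting to $\mathcal{Y}$ is exactly what is needed to make the representation meaningful (the decomposition itself would make formal sense on all of $\mathcal{X}$, but convergence in the strong sense is ensured only where $\phi^i$ is finite).
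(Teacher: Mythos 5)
Your proof is correct and follows the same route the paper intends: the paper gives no separate proof for this corollary, presenting it as a "straightforward corollary" of Proposition \ref{prop:article-2/MethodToWriteKal} Item 1, with the accompanying remark noting that the choice of $\mathcal{Y}$ is exactly what makes \eqref{eq:convergent} hold by monotone convergence over the partition of the support of the $h^i_j$ into the atoms $w_{j,n}$. Your verification of coherence, non-negativity, cylindricality, and the Tonelli/monotone-convergence rearrangement matches that argument.
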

Note in particular that the linear Hawkes process has an intensity which is not bounded by a fixed constant, so we cannot use the second part of Proposition \ref{prop:article-2/MethodToWriteKal}. In particular we stress that we will not be able to use the above decomposition for the purpose of Perfect Simulation of the stationary version of the linear Hawkes process (which does not necessarily exist under our minimal assumptions). However, we will be able to use the decomposition for an efficient Sampling Algorithm of the process in its non-stationary regime, starting from the empty past before time $0,$ see Section \ref{Sec:forward} below. 

\begin{remark}
The choice of $ \mathcal{Y} =\{x \in \mathcal{X} : \sum_{i \in \bf I}  \phi^i(x)<\infty\}$ is the minimal choice to ensure that \eqref{eq:convergent} holds, by monotone convergence. By classical results on linear Hawkes processes, it is well known that, starting from the empty past at time $0,$ $Z_t = (Z_t^i)_{i \in \bf{I}}$ stays within $ \mathcal{Y} $ almost surely, for any $t \geq 0, $ see e.g. \cite{delattre2016hawkes}.
\end{remark}
 }

\subsection{Age dependent Hawkes process with hard refractory period.} \label{hard AHP}

In this section, we are interested in writing a Kalikow decomposition for Age dependent Hawkes processes with hard refractory period, \change{for the purpose of Perfect Simulation of its stationary version}. This process was first introduced in \cite{Chev17}, and no Kalikow decomposition has been established \change{for this process}, even not in a conditional framework. 
In our setting, the stochastic intensity of an Age dependent Hawkes process with hard refractory of length $\delta>0$ can be written as follows. For any $i \in \textbf{I}$ and $x \in \mathcal{X}$,

\begin{equation} \label{eq: hard refractory period}
\phi^{i}(x) = \psi^i \left(\sum_{j \in \textbf{I}} \int_{-\infty}^{0}  h^i_j(- s) dx^j_s \right) \mathds{1}_{a^i(x)> \delta}
\end{equation}
with the \change{age of the process} defined by  $$a^i(x) = - \sup \{t^i_k \in  x \quad \text{such that} \quad t^i_k<0 \},
$$  
\change{that is, the delay since the last point. If there is no such last point, we simply put $a^i(x)= + \infty$. The function $\psi^i$ is called the rate function, whereas the $h^i_j$ are still the interaction functions of the Hawkes process. In this setting we allow for an infinite number of components, that is, $ \bf I$ might be infinite. The set $\mathcal{V}^i$ remains defined by \eqref{Vrondi} but the set $\mathcal{Y}$ is not the same. Indeed, by} definition of the stochastic intensity \eqref{eq: hard refractory period}, we observe that the distance \change{between} any two consecutive jumps \change{has} to be larger than $\delta$. This observation leads us to consider the subspace 
$$\mathcal{Y} = \mathcal{X}^{>\delta}$$ 
which has been introduced in \eqref{Xdelta}. 

\change{We also change the family of neighborhoods. Now we envision a family of nested and increasing neighborhoods. More specifically, from a nested family of subsets of ${\bf I}$ given by $(\omega^i_k)_{k\in \mathds{N}^*}$,  with $\omega^i_1=\{i\}, \omega^i_k\subset \omega^i_{k+1}$ and $\cup_{k\geq 1} \omega^i_k= {\bf I}$, one can design a nested family of neighborhoods ${\bf V}^{i}_{nested}$ by
$${\bf V}^{i}_{nested}=\{ v^i_k= \omega^i_k\times [-k\delta,0) \quad \mbox{for } k\in \mathds{N}^*\}.$$
Note that this family does not include the empty set. This is due to the presence of the hard refractory period. As we will see in Section \ref{sec:4}, the backward steps in the Perfect Simulation algorithm do not end only because of the probability of picking the empty set but also because some neighborhoods might just be empty (in the sense that no points appear in them). This the main difference with similar Perfect simulation algorithms that exist in discrete time, see e.g. \cite{ORB} or  \cite{HL}.

For this family, one can prove the following result, as a  corollary of Proposition \ref{prop:article-2/MethodToWriteKal}.}

\change{\begin{corollary}\label{Cor:age}
Assume that for all  $i, j  \in {\bf{I}},$ ${h}^i_j(.)$ is a non negative, non increasing $L^1$ function and that for every $i,$
$$\sum_{j \in \bf{I}} \| h^i_j\|_{1} < \infty \quad  \mbox{and} \quad \sum_{j \in \bf{I}} h^i_j(0) < \infty .$$
Then the multivariate Age dependent Hawkes process with hard refractory period defined by \eqref{eq: hard refractory period} admits a Kalikow decomposition w.r.t. ${\bf V}^i={\bf V}^{i}_{nested}$ and $\mathcal{Y}=\mathcal{X}^{>\delta},$ under the following conditions.
\begin{enumerate}
\item If for every $i,$  $\psi^i(.)$ is an increasing, non negative continuous function, then the Kalikow decomposition of Proposition \ref{prop:article-2/MethodToWriteKal} Item 1. applies with $\Delta^i_1(x)=\Delta^i_{v^i_1}(x)=\psi^i(0){\bf 1}_{a^i(x) > \delta} $ and for all $k\geq 2$
$$\Delta^i_{k}(x) = \Delta^i_{v^i_k}(x) = \psi^i \left(\sum_{j \in \omega^i_k} \int_{-k\delta}^{0}  h^i_j(- s) dx^j_s \right) {\bf 1}_{a^i(x) > \delta} - \psi^i \left(\sum_{j \in  \omega^i_{k-1}} \int_{-(k-1)\delta }^{0}  h^i_j(- s) dx^j_s \right) {\bf 1}_{a^i(x) > \delta}.$$
\item If in addition $\psi^i(.)$ is $L$-Lipschitz, the Kalikow decomposition of Proposition \ref{prop:article-2/MethodToWriteKal} Item 2. applies for the choices
$\Gamma^i_1=\Gamma^i_{v^i_1}\geq \bar{\Gamma}^i_1:=\psi^i(0)$ and for all $k\geq 2$
$$\Gamma^i_k=\Gamma^i_{v^i_k} \geq \bar{\Gamma}^i_k := L \left[ \sum_{j\in \omega^i_k\setminus \omega^i_{k-1}} \left(h^i_j(0)+\delta^{-1} \| h^i_j\|_{1}\right) + \sum_{j\in\omega^i_{k-1}} h^i_j((k-1)\delta)\right], $$
as long as $\sum_{k=1}^\infty  \Gamma^i_k  <\infty$. 

In particular, since $\sum_{k=1}^\infty  \bar{\Gamma}^i_k  \leq \psi^i(0) + 2 L \left[ \sum_{j\in {\bf I}}h^i_j(0) + \delta^{-1} \sum_{j\in {\bf I}} \| h^i_j\|_{1}\right],$   $\Gamma^i_k=\bar{\Gamma}^i_k$ is a valid choice.
\end{enumerate}
\end{corollary}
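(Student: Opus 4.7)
The plan is to apply Proposition \ref{prop:article-2/MethodToWriteKal}, so the work is essentially to verify its hypotheses on the candidate functions $\Delta^i_k$ and, in Item 2, on the candidate constants $\bar\Gamma^i_k$.

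For Item 1, I would proceed in three steps. First, I would check that each $\Delta^i_k$ is nonnegative and cylindrical on $v^i_k$. Nonnegativity follows because $\psi^i$ is increasing, the interaction functions $h^i_j$ are nonnegative, and the nested structure $\omega^i_{k-1}\subset \omega^i_k$ together with the enlargement of the integration window from $[-(k-1)\delta,0)$ to $[-k\delta,0)$ makes the argument of the first $\psi^i$ term larger than the argument of the second. Cylindricity is immediate from the definitions since only the configuration in $\omega^i_k\times[-k\delta,0)$ enters, and the indicator $\mathbf 1_{a^i(x)>\delta}$ only uses $\{i\}\times(-\delta,0)\subset v^i_k$. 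Second, I would sum the telescoping series:
\[
\sum_{k=1}^{K}\Delta^i_k(x)=\psi^i\!\left(\sum_{j\in\omega^i_K}\int_{-K\delta}^{0}h^i_j(-s)dx^j_s\right)\mathbf 1_{a^i(x)>\delta}.
\]
Since $\omega^i_K\uparrow{\bf I}$ and $[-K\delta,0)\uparrow(-\infty,0)$, monotone convergence (using $h^i_j\geq 0$) and continuity of $\psi^i$ give the pointwise limit $\phi^i(x)$. Finally, the choice of $\lambda^i$ is unconstrained beyond the positivity condition, so Item 1 of Proposition \ref{prop:article-2/MethodToWriteKal} applies.

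For Item 2, the main quantitative work is to bound $\Delta^i_k$ on $\mathcal{Y}=\mathcal{X}^{>\delta}$. Using $L$-Lipschitz continuity of $\psi^i$, I would write for $k\geq 2$
\[
\Delta^i_k(x)\leq L\left[\sum_{j\in\omega^i_k\setminus\omega^i_{k-1}}\int_{-k\delta}^{0}h^i_j(-s)dx^j_s+\sum_{j\in\omega^i_{k-1}}\int_{-k\delta}^{-(k-1)\delta}h^i_j(-s)dx^j_s\right].
\]
The key geometric observation is that on $\mathcal{X}^{>\delta}$ every half-open interval of length $\delta$ contains at most one point of $x^j$. Combined with monotonicity of $h^i_j$, this yields, for the first term, the bound $\sum_{m=0}^{k-1}h^i_j(m\delta)\leq h^i_j(0)+\delta^{-1}\|h^i_j\|_1$ (Riemann-sum comparison), and for the second term the bound $h^i_j((k-1)\delta)$ (at most one point in $[-k\delta,-(k-1)\delta)$, and $h^i_j$ evaluated at its minimum on the interval). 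This produces exactly $\bar\Gamma^i_k$.

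The last step is to check that $\sum_k\bar\Gamma^i_k<\infty$ so that $\Gamma^i=\sum_k\bar\Gamma^i_k$ is finite. I would swap the sums over $k$ and $j$: each $j$ belongs to exactly one shell $\omega^i_{k_0(j)}\setminus\omega^i_{k_0(j)-1}$, so the first block sums to $\sum_{j\in{\bf I}}(h^i_j(0)+\delta^{-1}\|h^i_j\|_1)$, while the second block, after swapping, becomes $\sum_{j\in{\bf I}}\sum_{k\geq k_0(j)+1}h^i_j((k-1)\delta)\leq\sum_{j\in{\bf I}}(h^i_j(0)+\delta^{-1}\|h^i_j\|_1)$ by the same Riemann-sum argument. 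This gives the stated bound $\psi^i(0)+2L[\sum_j h^i_j(0)+\delta^{-1}\sum_j\|h^i_j\|_1]$, which is finite under the hypotheses, and Item 2 of Proposition \ref{prop:article-2/MethodToWriteKal} applies.

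The only nontrivial obstacle is the combinatorial counting inside intervals of length $\delta$; everything else is essentially Lipschitz estimates, telescoping, and a Fubini-type sum swap. The hard refractory condition is exactly what makes this counting argument work, which also explains why the empty set neighborhood can safely be omitted from ${\bf V}^{i}_{nested}$.
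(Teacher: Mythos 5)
Your proposal is correct and follows essentially the same route as the paper: cylindricity and nonnegativity from the nested structure and monotonicity of $\psi^i$, telescoping plus monotone convergence for Item 1, and for Item 2 the Lipschitz bound split into the new-shell and old-time-window terms, controlled by the ``at most one point per $\delta$-interval'' count and a Riemann-sum comparison with $\|h^i_j\|_1$. The only difference is cosmetic: the paper handles the endpoint of the half-open intervals via an $\epsilon$-shift before letting $\epsilon\downarrow 0$, and leaves the final Fubini-type summation of $\sum_k\bar\Gamma^i_k$ implicit, whereas you spell it out.
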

}

\begin{proof}


\change{For the first point of the result}, by assumption, 
 $(\Delta^i_k(x))_{k \change{\geq 1}}$ is well-defined, 
non negative and cylindrical on  \change{$v^i_k=\omega^i_k\times[-k\delta,0)$, since the family of neighborhoods is nested}. Let
$$
r^i_n(x) := \sum_{k=1}^n \Delta^i_k(x) = \psi^i \left(\sum_{j \in \omega^i_n} \int_{-n \delta}^{0}  h^i_j(- s) dx^j_s \right) \mathds{1}_{a^i(x) > \delta} 
$$
and let us show that $r^i_n(x) \to \phi^i(x)$ when $n \to \infty$.
Consider the inner-term of the parenthesis, 
$$ 
\sum_{j \in \omega^i_n} \int_{-n \delta}^{0} h^i_j(-s) dx^j_s = \int_{{\bf I} \times \mathds{R}^{-}} h^i_j(-s) \mathds{1}_{(j,s) \in v^i_n} dx^j_s d\kappa_j ,
$$
where we denote $d\kappa$ the counting measure on the discrete set $\textbf{I}$. 

We have that $\left(h^i_j(-s) \mathds{1}_{(j,s) \in v^i_n} \right)_{ n \in \mathds{Z}}$ is a non negative and non decreasing sequence in $n$. In addition, it converges to $h^i_j(-s) \mathds{1}_{(j,s) \in \change{\bf I} \times (-\infty,0)}$ as $n \to \infty$. Moreover, since $\psi^i(.)$ is a continuous and increasing function, the monotone convergence theorem for Lebesgue Stieltjes measures implies that $r^i_n(x) \to \phi^{i} (x)$ as $n \to \infty$. As a consequence,  \change{Proposition \ref{prop:article-2/MethodToWriteKal}.1 applies.}

\change{For the second part, $\psi^i$ is $L$-Lipschitz. Hence we have
\begin{equation}\label{eq1}
\Delta_i^k(x) \leq L \times \left[\sum_{j\in \omega^i_k\setminus \omega^i_{k-1}} \int_{-k \delta}^{0} h^i_j (-s) dx^j_s + \sum_{j\in \omega^i_{k-1}} \int_{-k\delta}^{-(k-1)\delta} h^i_j (-s) dx^j_s\right].
\end{equation}

So  let us fix $0 \leq k < l , j \in \textbf{I}$, $x \in \mathcal{X}^{>\delta}$ and $t \in \mathds{R}$ and let us concentrate first  on upper bounding $\int_{t - l\delta}^{t- k \delta}h^i_j(t-s)dx^j_s$ by adapting}
Lemma 2.4 of \cite{RDL}. For any $\epsilon > 0$, we have

\begin{align*} 
\int_{[t- l \delta, t- k \delta-\epsilon]}h^i_j(t-s)dx^j_s &= \sum_{k \leq m < l} \int_{[t- (m+1)\delta - \epsilon, t- m \delta - \epsilon]} h^i_j(t-s)dx^j_s \\
&\leq \sum_{k \leq m < l} {h}^i_j(m \delta + \epsilon),
\end{align*}
\change{because $h^i_j$ is non increasing and because} there is at most one jump in the interval of length $\delta$.
Therefore, 
\begin{eqnarray*}
\int_{[t- l \delta, t- k \delta)}h^i_j(t-s)dx^j_s &= & \lim \limits_{\epsilon \downarrow 0} \int_{[t- l \delta, t- k \delta- \epsilon]}h^i_j(t-s)dx^j_s \\
                                                             &\leq& \lim \limits_{\epsilon \downarrow 0} \sum_{k \leq m < l} {h}^i_j(m \delta + \epsilon) 
                                                            \leq \sum_{k \leq m < l} {h}^i_j(m \delta),
\end{eqnarray*}
by  monotone convergence  and the fact that ${h}^i_j(.)$ is a decreasing function. 

\change{Going back to \eqref{eq1}, we have therefore that
$$ \int_{-k\delta}^{-(k-1)\delta} h^i_j (-s) dx^j_s \leq {h}^i_j((k-1)\delta))$$
and
$$  \int_{-k \delta}^{0} h^i_j (-s) dx^j_s \leq \sum_{0 \leq m \leq  k-1} {h}^i_j(m \delta) \leq h^i_j(0) + \delta^{-1} \int_0^{k-1} h^i_j(t)dt \leq  h^i_j(0) + \delta^{-1} \|h^i_j\|_1.$$
This shows that if $\Gamma^i_k\geq \bar{\Gamma}^i_k$, we are indeed upper bounding $\sup_{x \in \mathcal{X}^{>\delta}} \Delta^i_k(x)$. The last upper-bound on $\sum_k \bar{\Gamma}^i_k$ is done in a similar way.}
\end{proof}

\begin{remark}
\change{In particular this proves that $ \Delta^i_k(x) \to 0$ when $k \to \infty$ for fixed $x \in \mathcal{X}^{>\delta},$ and that this convergence is uniform on $ \mathcal{X}^{>\delta},$  if $\psi^i$ is Lipschitz.}
\end{remark}

\begin{remark}
\change{It is straightforward to see that}
\begin{align*}
\inf_{z \overset{v^i_k}{=} x } \psi^i \left(\sum_{j \in {\bf I}} \int_{-\infty}^{0}  h^i_j(- s) dz^j_s \right) \mathds{1}_{a^i(x) > \delta} = \psi^i \left(\sum_{j \in \omega^i_k} \int_{-k \delta}^{0}  h^i_j(- s) dx^j_s \right) \mathds{1}_{a^i(x) > \delta},
\end{align*}
\change{because of the monotonicity property of $\psi^i$ and since $h^i_j \geq 0.$ Indeed the minimizing configuration is obtained by having no points outside of $v^i_k$.} Hence, for $k \geq 2$, we observe that
$$
\Delta^i_k(x) =  \inf_{z \overset{v^i_k}{=} x } \left[\psi^i \left(\sum_{j \in {\bf I}} \int_{-\infty}^{0}  h^i_j(- s) dz^j_s \right) \mathds{1}_{a^i(x) > \delta} \right]
- \inf_{z \overset{v^i_{k-1}}{=} x }\left[\psi^i \left(\sum_{j \in{\bf I}} \int_{-\infty}^{0}  h^i_j(- s) dz^j_s \right) \mathds{1}_{a^i(x) > \delta}\right] .
$$
The above prescription corresponds to the classical method of obtaining a Kalikow decomposition in discrete time, as discussed in \cite{HL, GL2013} (see also Corollary \ref{thinn_kali}).
\end{remark}

\begin{remark}
\change{ Notice that it is not possible to let  $\delta$ tend to 0 and to recover Corollary \ref{Cor:age} for linear or even general non-linear Hawkes processes having Lipschitz continuous rate function, since typically the bounds in Corollary \ref{Cor:age} are exploding. }
\end{remark}

\subsection{\change{Analytic rate function in a Hawkes process \label{sec_nonlin}}}
\change{In the previous examples, we have shown how one can easily derive the Kalikow decomposition if the intensity is already in the shape of a sum, as it is the case for the linear Hawkes process, with an atomic family of neighborhoods. In this case the complexity of all neighborhoods is very small. In the case of the Age dependent Hawkes process, thanks to the refractory period and the monotonicity and continuity properties of the underlying functions, we have been able to derive a Kalikow decomposition as well, however with respect to a more complex family of neighborhoods which is nested. Moreover we have shown that under Lipschitz properties, the intensities are bounded. In both cases the processes were non exploding, that is, they only have a finite number of jumps within finite time intervals, almost surely. 

Now we want to reach much more erratic processes, that can even explode in finite time. To do so, we cannot keep the refractory period and we consider} nonlinear Hawkes processes $Z = (Z^i)_{i \in \textbf{I}}$ with intensity given by  
\begin{equation}\label{nonlin}
\phi^{i}(x) = \psi^{i} \left(\sum_{j \in \textbf{I}} \int_{-\infty}^{0} h^i_j(-s) dx^j_s \right)
\end{equation}
for any $x \in \mathcal{X} ,$ where $ \psi^i : \mathds{R}_+  \to \mathds{R}_+$ are measurable functions and where $ h_j^i : \mathds{R} \to \mathds{R}_+$ belong to $L^1.$ 
\change{

In \cite{BM96} it has been proved that if $\psi^i$ is Lipschitz, then -- under suitable additional assumptions on $ \| h^i_j\|_1 $ -- a stationary version exists. However, in the case where $\psi^i$ is only locally Lipschitz, the existence of a non-exploding or even a stationary solution is not guaranteed. For example, when choosing $ \psi^i ( x) = e^x, $  then the process may  explode in finite time with strictly positive probability,  see \cite{Cars}.}

\change{We are going to use analytical properties of $\psi^i$ to perform a Taylor expansion of $\psi^i.$ To do so, we are considering a family of neighborhoods which is a sort of  iterated tensor products of ${\bf V}_{atom}$. For a fixed $\epsilon$, recall that an atomic neighborhood (see \eqref{atomic}) is such that
$w_{j,n}=\{ j \} \times [-n\epsilon, -(n-1)\epsilon)$, for $j\in {\bf I}$ and $n\in\mathds{N}^*$.
A neighborhood of order $k$ is then a union of $k$ atomic neighborhoods. For $\alpha_1=(j_1,n_1)$,..., $\alpha_k=(j_k,n_k),$ we put
$$v_{\alpha_{1:k}}=\bigcup_{k=1}^nw_{j_k,n_k}.$$
The family of all possible such unions is denoted ${\bf V}_{\otimes k}$, with the convention that ${\bf V}_{\otimes 0}=\{\emptyset\}$. 

The Taylor family of neighborhoods is then defined by
$${\bf V}_{Taylor}= \bigcup_{k=0}^\infty {\bf V}_{\otimes k}.$$

Note that if for instance, some of the $\alpha_i$'s are equal, then the neighborhood $v_{\alpha_{1:k}}$ collapses  on a smaller union of less than $k$ intervals and it is also possible that two neighborhoods with different indices are in fact equal. We do not  simplify the family and remove redundancies. It is important that neighborhoods with different indices are considered different to define properly the probability $\lambda^i$.

We are limited by the radius of convergence of the function $\psi^i$, say $K$. This is why we are working within the space 
$$\mathcal{Y}=\mathcal{X}^{<K} = \{ x \in \mathcal{X} : \sup_{i \in {\bf I}} \sum_{j \in {\bf I}} \int_{- \infty}^0 h^i_j (-s) d x_s^j < K \}.$$}

\change{\begin{corollary}
Let us assume that for any $i,j \in {\bf I}$, the function $h^i_j(.)$ is non negative and that for every $i$,
\begin{equation*}
\sup_t\sum_{j \in {\bf I}} h^i_j(t) <\infty.
\end{equation*}
Let us also assume that for every $i \in {\bf{I}}$, $\psi^{i}(.)$ is an analytic function on $\mathds{R}$ with radius of convergence $K > 0 $ around  $0$, such that its derivative of order $k$, $[\psi^i]^{(k)}(0),$ is non negative for all $k\geq 0.$
Then the multivariate non linear Hawkes process defined by \eqref{nonlin} admits the  Kalikow decomposition of Proposition \ref{prop:article-2/MethodToWriteKal}.1 w.r.t to ${\bf V}^i= {\bf V}_{Taylor}$ for all $i$ and with respect to the subspace $\mathcal{Y}=\mathcal{X}^{<K}$, with
\begin{itemize}
\item if $v=\emptyset$, $\Delta^i_v(x)= \psi^i(0).$
\item if $v=v_{\alpha_{1:k}}$, for $\alpha_1=(j_1,n_1)$,..., $\alpha_k=(j_k,n_k)$, then
$$\Delta^i_v(x) = \frac{[\psi^i]^{(k)}(0)}{k !} a_{\alpha_1}(x) \cdot \ldots \cdot a_{\alpha_k}(x),$$
where for $\alpha=(j,n)$, 
$$a_{\alpha}(x) := \int_{-(n+1)\epsilon}^{-n\epsilon} h^i_j(-s) dx^j_s .$$
\end{itemize}
\end{corollary}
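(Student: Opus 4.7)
The plan is to verify the hypothesis \eqref{eq:convergent} of Proposition~\ref{prop:article-2/MethodToWriteKal}.1 by performing a double expansion: first an analytic Taylor expansion of $\psi^i$ around $0$ and then an ``atomic'' multinomial-type expansion of the integral that feeds it. Fix $x \in \mathcal{X} \cap \mathcal{X}^{<K}$ and set
\[
y = y(x) := \sum_{j \in {\bf I}} \int_{-\infty}^{0} h^i_j(-s)\,dx^j_s ,
\]
so that $y \in [0,K)$ by definition of $\mathcal{X}^{<K}$. Partitioning $(-\infty,0)$ into the disjoint atomic intervals of length $\epsilon$ that index the atomic neighborhoods yields $y = \sum_{\alpha} a_\alpha(x)$, where $\alpha=(j,n)$ ranges over all atomic indices and each $a_\alpha(x) \geq 0$. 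Each $a_\alpha(x)$ depends only on the points of $x^j$ lying in the single atomic interval $[-(n+1)\epsilon,-n\epsilon)$, hence is cylindrical on the corresponding atomic neighborhood.

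Next I would invoke analyticity: since $\psi^i$ has radius of convergence $K$ at $0$ and $y < K$,
\[
\psi^i(y) = \sum_{k=0}^{\infty} \frac{[\psi^i]^{(k)}(0)}{k!}\, y^k,
\]
with absolute convergence. Because $[\psi^i]^{(k)}(0) \geq 0$ and $a_\alpha(x) \geq 0$ for every index, every term below is non-negative, so Tonelli's theorem for series permits me to expand
\[
y^k = \Bigl( \sum_{\alpha} a_\alpha(x) \Bigr)^{k} = \sum_{\alpha_1, \ldots, \alpha_k} a_{\alpha_1}(x) \cdots a_{\alpha_k}(x)
\]
as a (possibly infinite) sum of non-negative terms and freely interchange this sum with the Taylor series to obtain
\[
\phi^i(x) = \psi^i(y) = \psi^i(0) + \sum_{k=1}^{\infty} \sum_{\alpha_{1:k}} \frac{[\psi^i]^{(k)}(0)}{k!}\, a_{\alpha_1}(x) \cdots a_{\alpha_k}(x).
\]

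The final step is to recognize the right-hand side as precisely $\sum_{v \in {\bf V}_{Taylor}} \Delta^i_v(x)$ for the prescribed $\Delta^i_v$. For $v = v_{\alpha_{1:k}}$, $\Delta^i_v(x)$ is cylindrical on $v$ since each factor $a_{\alpha_j}(x)$ is cylindrical on the atomic neighborhood $w_{\alpha_j} \subset v$, and the term is non-negative by the sign assumptions; the empty-set contribution is the non-negative constant $\psi^i(0)$. Hypothesis \eqref{eq:convergent} is therefore satisfied on $\mathcal{X} \cap \mathcal{X}^{<K}$, and Proposition~\ref{prop:article-2/MethodToWriteKal}.1 delivers the claimed Kalikow decomposition for any probability $\lambda^i$ on ${\bf V}_{Taylor}$ that charges the neighborhoods on which $\Delta^i_v$ does not vanish identically.

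The main technical obstacle is the rigorous justification of the two series manipulations: expanding $y^k$ into an infinite multinomial-type sum and exchanging it with the Taylor series. Non-negativity, guaranteed by the hypotheses $h^i_j \geq 0$ and $[\psi^i]^{(k)}(0) \geq 0$, reduces both manoeuvres to routine applications of Tonelli, which is precisely why these sign conditions appear in the assumptions. The standing bound $\sup_t \sum_j h^i_j(t) < \infty$ does not enter the derivation of the decomposition per se, but rather guarantees local finiteness properties used downstream when translating the decomposition into a simulation scheme. A minor bookkeeping issue is the slight index shift between the atomic neighborhoods $w_{j,n}$ (with $n \geq 1$) and the atoms $a_{(j,n)}$ (with $n \geq 0$), which amounts only to a relabelling of the partition of $(-\infty,0)$ and has no bearing on the argument.
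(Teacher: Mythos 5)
Your proof is correct and fills in exactly the argument the paper leaves implicit (the paper's own proof is a one-line appeal to "the analytic properties of $\psi^i$"): Taylor-expand $\psi^i$ at $0$, expand $y^k$ multinomially over the atomic contributions $a_\alpha(x)$, and justify all rearrangements by non-negativity of the coefficients and of the $h^i_j$. Your side remarks — the index shift between $w_{j,n}$ and $a_{(j,n)}$, and the role of $\sup_t\sum_j h^i_j(t)<\infty$ as ensuring the intensity is well defined rather than entering the decomposition itself — are consistent with the paper's reading.
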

\begin{proof}
The condition ensures that the intensities are at least locally defined if, starting from an empty past, one first point occurs. The proof of the convergence is then straightforward and consists in using the analytic properties of $\psi^i$.
\end{proof}

Note that the linear Hawkes process is a particular case of this Corollary with $\psi^i(u)=\mu^i+u$ and $K=\infty$.
But the above strategy can also be applied to  the exponential Hawkes process, $\psi^i(u)=\exp(u)$, $K=\infty$ and $[\psi^i]^{(k)}(0)=1$ for all $k$, which is only locally Lipschitz. One can also use it for other functions, for instance $\psi^i(u)=\mathrm{ch}(u)= (e^u+e^{-u})/2$ with $K=\infty$, for which even derivatives $[\psi^i]^{(2k)}(0)=1$ and for which the other derivatives are null.}

\subsection{Galves-L\"ocherbach processes with saturation thresholds}
\change{We close this section with an example which is not directly related to Hawkes processes and for which, using the same arguments as above, we may establish a Kalikow decomposition as well. This  is the model with saturation threshold which has already been considered in \cite{HL}. }
\change{More precisely, we put 
\begin{equation}\label{eq:GL}
 \phi^i ( x)  = \psi^i \left( \sum_{j \in \bf{ I}}  \left( \beta_j^i  \; Z^j ( ( - a^i (x), 0)) \right) \wedge K_j^i \right)  , 
\end{equation} 
for $ \psi^i : \mathds{R} \to \mathds{R}_+ $ a Lipschitz continuous non-decreasing rate function, $ \beta_j^i  \geq 0 $ the weight of $ j $ on $ i ,$ where we choose $\beta^i_i = 0,$  and $ K_j^i \geq 0 $ the saturation threshold. In this framework, analogously to Corollary \ref{Cor:age}, we can prove the following }
\begin{corollary}\label{GL-model}

Consider the Galves-L\"ocherbach process having generic intensity $ \phi^i $ given by \eqref{eq:GL} and suppose that 
$$ \sup_{ i \in \bf{ I}}  \sum_{j \in \bf{ I}} K_{j}^i   < \infty .$$ 
Then $\phi^i $ admits the Kalikow decomposition given by Proposition \ref{prop:article-2/MethodToWriteKal} Item 1., with respect to  the collection of neighborhood families $({\bf V}^i)_{i \in {\bf I}}$ where for all $i$, ${\bf V}^i={\bf V}^{i}_{nested}\cup \{v^i_0=\emptyset\},$ and with respect to the subspace $\mathcal{Y}= \mathcal{X} ,$ with $\Delta^i_0 ( x) = \psi^i ( 0 )$ and for any $ k \geq 1, $ 
$$\Delta^i_{k}(x) = \Delta^i_{v^i_k}(x) = \psi^i \left(\sum_{j \in \omega^i_k} [\beta_j^i \int_{-(k\delta) \wedge a^i (x) }^{0}  dx^j_s] \wedge K^i_j  \right) - \psi^i \left(\sum_{j \in  \omega^i_{k-1}} [\beta_j^i \int_{-((k-1)\delta)\wedge a^i ( x) }^{0} dx^j_s] \wedge K^i_j  \right) ,$$
where $ w^i_0 =\emptyset.$
\end{corollary}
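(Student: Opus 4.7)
The plan is to apply Item 1 of Proposition \ref{prop:article-2/MethodToWriteKal} directly with the candidate $\Delta^i_k$ provided in the statement. So I need to verify three things: (i) each $\Delta^i_k$ is non-negative and cylindrical on $v^i_k$, (ii) the partial sums telescope and converge on $\mathcal{Y} = \mathcal{X}$, and (iii) the limit equals $\phi^i(x)$. The subspace $\mathcal{Y}$ can be the whole $\mathcal{X}$ precisely because of the saturation thresholds $K^i_j$, which cap each summand regardless of how rich the past configuration is.

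For non-negativity I would use the monotonicity of $\psi^i$ together with the nested structure of the arguments: since $\omega^i_{k-1}\subset \omega^i_k$ and $(k-1)\delta \wedge a^i(x)\le k\delta\wedge a^i(x)$, each individual term $[\beta_j^i\int\cdot] \wedge K^i_j$ is non-decreasing in $k$ and the sum over $j$ runs over a larger set, so the argument of $\psi^i$ in the first term of $\Delta^i_k$ dominates that in the second. For cylindricality, the key observation is that $i\in\omega^i_1\subset\omega^i_k$ by the nestedness assumption, so the age $a^i(x)\wedge k\delta$ depends only on points of component $i$ in $[-k\delta,0)$, which lies inside $v^i_k$. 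Similarly, for every $j\in\omega^i_k$, the integral $\int_{-(k\delta)\wedge a^i(x)}^{0} dx_s^j$ depends only on points of $j$ in $[-k\delta,0)$, again inside $v^i_k$; the same argument with $k-1$ replacing $k$ handles the second term, which is cylindrical on $v^i_{k-1}\subset v^i_k$.

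For the telescoping, I would observe that
$$\sum_{k=0}^n \Delta^i_k(x) \;=\; \psi^i\!\left(\sum_{j \in \omega^i_n} \bigl[\beta_j^i \textstyle\int_{-(n\delta) \wedge a^i(x)}^{0} dx_s^j\bigr] \wedge K_j^i\right).$$
Then I would pass to the limit $n\to\infty$ in two stages. First, for each fixed $j$, monotone convergence shows that $\int_{-(n\delta)\wedge a^i(x)}^{0} dx_s^j \uparrow Z^j((-a^i(x),0))$ as $n\to\infty$ (possibly to $+\infty$ if $a^i(x)=+\infty$), and the cap yields convergence of $[\beta_j^i \int \cdot]\wedge K_j^i$ to $(\beta_j^i Z^j((-a^i(x),0)))\wedge K_j^i$. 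Second, since each summand is dominated by $K_j^i$ and the assumption $\sup_i\sum_j K_j^i<\infty$ gives a summable bound independent of $x$, dominated convergence allows me to swap the limit with the sum over $j$. The continuity of $\psi^i$ (guaranteed by its Lipschitz property) finally identifies the limit with $\phi^i(x)$.

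The main technical point, as in the proof of Corollary \ref{Cor:age}, is to make sure that the saturation threshold is doing double duty: ensuring cylindricality when $a^i(x)$ is infinite or when $Z^j((-a^i(x),0))$ is infinite, and simultaneously providing the integrable majorant for the exchange of limit and sum. Beyond this verification there is no further obstacle, and Item 1 of Proposition \ref{prop:article-2/MethodToWriteKal} then applies verbatim.
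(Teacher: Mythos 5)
Your proof is correct and follows exactly the route the paper intends: the paper gives no explicit proof of this corollary, deferring to the analogy with Corollary \ref{Cor:age}, and your verification (non-negativity via monotonicity of $\psi^i$ and the nestedness of the $\omega^i_k$, cylindricality of the age term because $i\in\omega^i_1$, telescoping of the partial sums, and passage to the limit using the saturation caps $K^i_j$ as a summable majorant together with the continuity of $\psi^i$) is precisely that analogous argument. No gaps.
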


\section{Simulation algorithms}\label{sec:4}
\change{In this section we present two types of algorithms corresponding to the two items of Proposition \ref{prop:article-2/MethodToWriteKal}. 
First, a simulation algorithm that simulates the time homogeneous counting process $ Z$ for finite networks, starting from the empty past before time $0,$ up to some finite time, under  the conditions of Item 1. of Proposition \ref{prop:article-2/MethodToWriteKal}. And secondly, a Perfect Simulation algorithm that simulates the process $ Z$ in its stationary regime, within a finite space-time window, under the conditions of Item 2. of Proposition \ref{prop:article-2/MethodToWriteKal}. }

\subsection{Simulating forward in time, starting from a fixed past}\label{Sec:forward}
\change{Suppose that we are in the situation of Item 1. of Proposition \ref{prop:article-2/MethodToWriteKal}, with $ \bf{I} $ finite and that we wish to simulate the process until a certain $t_{max}>0$ starting from the empty past before time $0.$ We might never reach $t_{max}$ because the process might typically explode, so we introduce
$$ \tau := t_{max} \wedge \inf \{ t \geq 0 : Z_t \notin \mathcal{Y} \}.$$
For simplicity we also suppose that in absence of further jumps, the intensities are non-increasing. More precisely, introducing for all $i \in \bf{I}, $ for all $ t \geq 0, $ for any $x \in \mathcal{X}, $ the configuration where all jumps of $ x$ are shifted to the left by $t $  and no further jumps are added within $ (- t, 0 ), $ which is denoted by   $ ( x^{ \leftarrow t },  {\bf 0} ), $ we suppose 

\begin{assumption}
For all $ i \in \bf{I}, $  for all $v \in \bf{V}^{i}  $ and for all $ t \geq 0, $ 
$$  \phi^i_v ( (x^{ \leftarrow t},  {\bf 0}) ) \le \phi^i_ v (x).$$   
\end{assumption}
This is typically the case for the linear or the exponential Hawkes processes with decreasing interaction functions $h^i_j$.

Let us now describe how we simulate forward in time, starting from the empty past before time $0,$ up to some maximal number of simulation steps $N_{max} > 0 .$} \\

\change{
{\bf Step 0.} We initialize the set of points with $ X = {\bf 0} \in \mathcal{X} ,$ where $ {\bf 0} \in \mathcal{X}$  designs the configuration having no points,  and $ T = N= 0 .$  \\

\noindent While $ T \le \tau $ and $ N \le N_{max}$  do the following. \\

{\bf Step 1.} For each $ i \in \bf{I}, $ compute $ \tilde{\Gamma}^i \geq  \sup_{ v \in \textbf{V}^{i}} \phi_v^i ( X^{\leftarrow T}). $ \\

{\bf Step 2.} Simulate the next jump $ T \leftarrow T + Exp ( \sum_{i \in \bf{I}} \tilde{\Gamma}^i ) .$\\
If $T>\tau$ stop, else choose the associated index $ i$ with probability 
$$ \mathds{P} ( I = i ) = \frac{\tilde{\Gamma}^i }{\sum_{j \in \bf I} \tilde{\Gamma}^j }. $$ 

{\bf Step 3.} Choose the associated interaction neighborhood $ V_T= v $ with probability $ \lambda^i ( v).$ \\

{\bf Step 4.} Accept $ T  $ as a jump of index $ i $ with probability $ \frac{ \phi^i_v (X^{\leftarrow T })}{\tilde{\Gamma}^i }$ and add $ ( i, T) $ to $X$ in this case. Put $ N \leftarrow N +1.$ If the point is rejected,  do nothing. Go back to Step 1. 
 }

\change{\begin{remark}
The bound $\tilde{\Gamma}^i ,$ computed in {\bf Step 1}, is random since it depends on the configuration $X$ that has been simulated so far.
\end{remark}}

\change{\begin{corollary} [of Proposition \ref{thinn_kali}]
The previous algorithm simulates a time homogeneous counting process $Z$ of generic intensity given by  Item 1. of Proposition \ref{prop:article-2/MethodToWriteKal}.
\end{corollary}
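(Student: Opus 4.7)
The plan is to interpret the algorithm as a direct, sequential implementation of the Poisson thinning representation of Proposition~\ref{thinn_kali}, in which the (otherwise unbounded) dominating intensity is refreshed at every accepted jump. The standing assumption plays the bridging role: once $\tilde{\Gamma}^i$ has been computed from $X^{\leftarrow T}$ in Step~1, it yields $\phi^i_v(X^{\leftarrow s}) = \phi^i_v((X^{\leftarrow T})^{\leftarrow (s-T)}, {\bf 0}) \leq \phi^i_v(X^{\leftarrow T}) \leq \tilde{\Gamma}^i$ for every $v \in \bf{V}^{i}$ and every $s \in [T, T')$ up to the next proposed time $T'$, because in absence of new jumps the past at time $s$, viewed rooted at $0$, is precisely the shifted $X^{\leftarrow T}$ with no additions. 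Hence $\tilde{\Gamma}^i$ is a genuine dominator on the whole inter-proposal interval.

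Conditionally on the current configuration $X$ at time $T$, Steps~2, 3 and~4 constitute a textbook thinning construction. By standard properties of exponential variables and of superposition of independent Poisson processes, Step~2 samples the next atom of the superposition over $i \in \bf{I}$ of independent Poisson processes of rate $\tilde{\Gamma}^i$ on $[T, \infty)$, together with its coordinate label $I$; Step~3 attaches an independent $\lambda^I$-distributed mark $V_{T'}$. The triple $(T', I, V_{T'})$ is then the next atom of a marked Poisson measure whose intensity on the $i$-fibre is $\tilde{\Gamma}^i\, ds\, \lambda^i(dv)$. Step~4 accepts it with probability $\phi^I_{V_{T'}}(X^{\leftarrow T'})/\tilde{\Gamma}^I \in [0, 1]$, which is well-defined precisely thanks to the preceding domination. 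By the coloring theorem, the accepted atoms form, on each fibre $i$, a Poisson measure with intensity $\phi^i_v(X^{\leftarrow s})\, ds\, \lambda^i(dv)$, whose projection on time has instantaneous rate $\sum_{v \in \bf{V}^{i}} \lambda^i(v) \phi^i_v(X^{\leftarrow s}) = \phi^i(X^{\leftarrow s})$ by the Kalikow decomposition of Item~1 of Proposition~\ref{prop:article-2/MethodToWriteKal}.

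The delicate step, and what I expect to be the main obstacle, is the concatenation across successive accepted jumps: each accepted atom at time $T'$ modifies $X$, whereupon $\tilde{\Gamma}^i$ is recomputed and the construction restarts from Step~1. An induction on the accepted jump times, using the strong Markov property of the dominating Poisson measure and the fact that rejected atoms leave $X$ unchanged, shows that the inter-jump pieces glue together into a single counting process whose predictable compensator is $\int_0^\cdot \phi^i(X^{\leftarrow s})\, ds$ on $[0, \tau]$. By Proposition~7.2.IV of~\cite{DVJ}, this identifies the output with the time homogeneous counting process of Item~1 of Proposition~\ref{prop:article-2/MethodToWriteKal}, and the termination criteria $T > \tau$ or $N > N_{max}$ merely truncate the output within the desired simulation window.
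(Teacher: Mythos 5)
Your proof is correct and follows essentially the same route as the paper: both interpret the algorithm as an adaptive thinning of the marked Poisson measure of Proposition \ref{thinn_kali}, with the band height $\tilde{\Gamma}^i$ recomputed at each accepted jump and the standing monotonicity assumption guaranteeing that it dominates $\phi^i_v$ on the whole inter-proposal interval. If anything, your version is more explicit than the paper's (which argues informally that the steps are ``consistent with'' the tridimensional Poisson process), in particular in spelling out the domination inequality and the gluing of inter-jump pieces via the compensator.
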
}

\begin{proof}
\change{ Note that the random variable $\tilde{\Gamma}^i$ is a bound on the generic intensity $\phi^i$ in absence of appearance of a new point in the future after time $T$. Hence the ``new" $T$ (see also Section \ref{thinning}) computed at {\bf Step 2} can be seen as  the abscissa of a point of an hidden bivariate Poisson process in a band of height  $\sum_{i \in \bf{I}} \tilde{\Gamma}^i$. The association of a particular index $i$ is quite usual (see for instance Ogata's algorithm \cite{Ogata}) and it means that if the chosen index is  $i,$  then the point $T$ can also be seen as the next point of the bivariate Poisson process $\pi^i$ that is discussed in Section \ref{thinning} in the band of height $\tilde{\Gamma}^i$. Since this is an upper-bound on $\phi^i,$ the thinning procedure will not consider points of $\pi^i$ outside this band, at least until a new point is accepted. The association of a neighborhood in {\bf Step 3} is therefore consistent with the tridimensional Poisson process of Proposition \ref{thinn_kali}, and the points that are accepted at {\bf Step 4} are the ones accepted  in Proposition \ref{thinn_kali}. Therefore this algorithm indeed simulates points with the desired intensities as long as the the process stays in $\mathcal{Y}$ and the Kalikow decomposition holds.}
\end{proof}

\change{\begin{remark}
Note that this algorithm will indeed save computational time if both the computation of $\tilde{\Gamma}^i$ and  the verification that one stays in $\mathcal{Y}$  is easy. In the particular example of Linear Hawkes process (Corollary \ref{linHaw_kali}),  with decreasing $h^i_j$'s, and bounded support say on $[0,\epsilon]$, one can take $\tilde{\Gamma}^i=h^i_j(0) Z^i([T-\epsilon,T))$ and we know that the process will stay in $\mathcal{Y}$ with probability $1$. Further algorithmic work, beyond the scope of the present article, might include a real handcraft on the design of  $\tilde{\Gamma}^i$ to save computational time.
\end{remark}}

\subsection{Perfect simulation of the process in its stationary regime}\label{sec:41}
\change{Throughout this section we suppose that we are in the situation of Item 2. of Proposition \ref{prop:article-2/MethodToWriteKal}, that is the generic intensities $\phi^i$'s are upperbounded by deterministic constants $\Gamma^i$  and that the subspace $\mathcal{Y} $ is invariant under the dynamics, that is, $ Z_t \in \mathcal{Y} $ implies  $Z_{t+s} \in \mathcal{Y} $ for all $ s \geq 0.$ This is e.g. the case for the Age dependent Hawkes process with hard refractory period considered in Section \ref{hard AHP} above. In particular, in this case, it is possible to restrict the dynamic to $\mathcal{Y} ,$ and in what follows we will show how to simulate from the unique stationary version of the process within this restricted state space $ \mathcal{Y}.$} 

In what follows, we propose to simulate, in its stationary regime, the process $Z^i$ for a fixed $i \in \bf{I},$ on an interval $[0, t_{max}],$ for some fixed $ t_{max} > 0 .$ Our algorithm is a modification of the method described in \cite{PMR}, which works in the case where all the $\Gamma^i$'s are equal. The procedure consists of backward and forward steps. In the backward steps, thanks to the Kalikow decomposition, we create a set of ancestors, which is a list of all the points that might influence the point under consideration. On the other hand, in the forward steps, where we go forward in time, by using the thinning method \cite{Ogata} in its refined version stated in Theorem \ref{thinn_kali}, we give the decision (acceptance or rejection) to each visited point based on its neighborhood until the state of all considered points is decided. 

\change{
The idea of relying on such a two-step procedure is not new and has already been proposed in the literature, even in a continuous time setting, see e.g. \cite{pablo}, \cite{GLO} and \cite{GGLO}  where such an approach is used to  simulate from infinite range Gibbs measures and/or from the steady state of  interacting particle systems, relying on a decomposition of the spin flip rates as a convex combination of local flip rates. The main difference of our present approach with respect to these results is twofold. Firstly, in these articles, only spatial interactions are considered -- while we have to decompose both with respect to the spatial interactions and the history in the present article. And secondly and more importantly, in all these articles the authors manage to go back to transition probabilities -- and then establish a Kalikow decomposition for these probabilities. So somehow this means that we are back in the framework of discrete time processes as in \cite{Kal90} where these ideas have been introduced. Such a discrete time approach is still used in \cite{HL}. The idea of decomposing directly the intensities, not to go back to probabilities, and to decompose  both with respect to time, that is, history, and space, that is, the interactions, is -- at least to our knowledge -- completely new. }

\subsubsection{Backward procedure}

\change{Recall that according to Item 2. of Proposition \ref{prop:article-2/MethodToWriteKal}, all functions $ \phi^i$ and $ \phi^i_v $ are bounded by a fixed deterministic constant $ \Gamma^i.$

For the sake of understandability from a theoretical point of view, we will assume that the $N^i$'s are independent homogeneous Poisson processes on the real line with intensity $\Gamma^i$ and that these points are fixed before one starts the algorithm. Indeed following what has been said in Proposition \ref{thinn_kali}, we will recover the points of $Z^i$ as a thinning of a tridimensional Poisson process. The fact that the intensities are bounded allows us to thin only in a band of height $\Gamma^i$, and that is why we are going to thin $N^i$ into $Z^i$. 

Of course in practice, the infinite knowledge of the $N^i$'s is not possible and the {\bf Step 1'} and {\bf Step 2'} are there to explicit concretely how this is done in practice. 

\bigskip

\textbf{Step 0.} Fix $i$, set the initial time to be $T_0=0$. 

\bigskip

\textbf{Step 1.} Take $T$ the first point of $N^i$ after 0. This is the first possible jump  of $Z^i$ after $T_0$. 

\textbf{Step 1'.} Simulate this $T$ as 
$$ 
T \leftarrow T_0 + Exp(\Gamma^i).
$$
In particular this implies that the underlying $N^i$ is empty on $[T_0,T)$.\\
If $T>t_{max}$, stop.

\bigskip

\textbf{Step 2.} Independently of anything else, pick a random neighborhood $V^i_T$ of $(i,T)$ according to the distribution $(\lambda^i  (v))_{v \in \textbf{V}^{i}}$ given in  \eqref{eq:choicelambda}, that is 
$$
\mathds{P}(V^i_T = v) = \lambda_{i}(v).
$$
On $ \{V^i_T = v\}, $ consider the shift of this neighborhood $v$  by $T$ defined by  
$$ v^{ \rightarrow T} := \{ (j , u + T ) : (j, u ) \in v\} $$ 
and, for any $ j \in \bf{I}, $ the projection to the second coordinate of $v^{ \rightarrow T} $ by
$$
p_{j}(v^{ \rightarrow T} ) := \left\{t \in \mathds{R} :  (j,t) \in v^{ \rightarrow T}  \right\} .
$$
Notice that if for some $j, $  $(j,t) \notin v^{ \rightarrow T}$ for all $t,$ then $p_{j}(v^{ \rightarrow T}) = \emptyset.$
Finally, we look for the set of points of $N$ that might directly influence the decision of acceptation/rejection of $T$:
$$
\mathcal{C}_{1}^{(i,T)} = \bigcup\limits_{j \in \textbf{I}} \left\{(j,t) :  t \in  p_j(v^{\rightarrow T})  \mbox{ is a jump of } N^j   \right\}.
$$

\textbf{Step2'.} Simulate independent Poisson processes in  $v^{\rightarrow T}$, that is for each $j \in \textbf{I}$, we simulate the points of  $N^j$  independently of anything else as a Poisson process of intensity $\Gamma^j$ on $p_{j}(v^{\rightarrow T})$ to identify as before $
\mathcal{C}_{1}^{(i,T)}$.

It is possible that at this point in the algorithm (especially in the iteration below), $v^{\rightarrow T}$ intersects with neighborhoods that have already been picked. In this case, simulate only on the portion of the neighborhood that has never been visited before. Note in particular that we take for already visited 
region the set $\{i\}\times[T_0,T),$ and that within this region there are no points by definition of $ T.$

\bigskip

\textbf{Step 3.} Recursively, we define the $n^{th}$ set of ancestors of $(i,T)$ by 
$$
\mathcal{C}_{n}^{(i,T)} = \bigcup_{(j,s) \in \mathcal{C}_{n-1}^{(i,T) }} \mathcal{C}_{1}^{(j,s)} \setminus \left(\mathcal{C}_1^{(i,T)} \cup \ldots \cup \mathcal{C}_{n-1}^{(i,T)} \right) ,
$$
by performing {\bf Step 2} or {\bf Step 2'}  for each $(j, s) \in \mathcal{C}_{n- 1}^{(i,T)}.$ 

Note that $
\mathcal{C}_{n}^{(i,T)}$  exactly corresponds to the points that are really simulated as new in the iterated {\bf Step 2'}.}

\bigskip

We denote 
$$
N^{(i,T)} = \inf \{n: \mathcal{C}_n^{(i,T)} = \emptyset \}, 
$$
where $\inf \emptyset := + \infty.$ \change{The backward scheme stops when $N^{(i, T ) } < \infty, $ and we give below sufficient conditions guaranteeing this fact (see Proposition \ref{prop:article-2/Stopping condition} below). In this case we say that the total clan of ancestors  of $(i,T)$ is given by 
$$
\mathcal{C}^{(i,T)} = \bigcup_{k =1}^{N^{(i,T)}} \mathcal{C}_{k}^{(i,T)}
$$}
In what follows we also consider the associated interaction support given by 
$$ 
\mathcal{V}^{(i, T) } = \bigcup_{(j, t ) \in \mathcal{C}^{(i,T)} } (V^i_t)^{\rightarrow t } $$ 
and we put 
\begin{equation}\label{eq:tit}
 T^{(i, T )} :=T-  \inf \{ s : \exists j \in {\bf{I}} : (j, s) \in \mathcal{V}^{(i,T)}  \}
\end{equation} 
which is the total time the backwards steps need to look back in the past.

\begin{remark}
Let us emphasize that $\lambda_i(\emptyset)$ does not need to be strictly positive in order to guarantee that $N^{(i, T ) } < \infty. $  If $\lambda_i(\emptyset)=0$, at every step of the Backward scheme, we always need to simulate a Poisson process in a non empty neighborhood. However, if there is no point simulated in such a neighborhood, then we do not add any points to the clan of ancestors, and if this happens sufficiently often, then the Backward step ends as well. This is one of the main advantages of our approach and a major difference with respect to \cite{HL,GL2013}. \change{We give in Proposition \ref{prop:article-2/Stopping condition} below sufficient conditions implying that the algorithm stops after a finite number of steps almost surely. }
\end{remark}

\change{\begin{remark}
The steps {\bf Step 1'} and {\bf Step 2'} are consistent with recreating the processes $N^i$'s in the neighborhoods of interest, instead of taking them for granted beforehand. In particular this is the reason why the algorithm is very careful when dealing with overlapping neighborhoods to not simulate twice the same process on the same portion of the space.
\end{remark}}

\subsubsection{Forward procedure}
\change{Supposing that $N^{(i, T ) } < \infty, $} we now use a Forward procedure to \change{accept or reject recursively} each point in $\mathcal{C}^{(i,T)}$\change{, until the status of $T$ is decided.} 

We start with the point $(j,s) \in \mathcal{C}^{(i,T)}$ which is the smallest in time, so that its associated neighborhood is either empty $(v = \emptyset)$ or non empty but without any point of the Poisson process in it.

\textbf{Step 1.} \change{Accept  $(j,s)$} with probability $\dfrac{\phi_{V^j_s}^{j}(X^{\leftarrow s}_s)}{\Gamma^j}$ where $V^j_s$ is the neighborhood  of $(j,s)$ . 

\bigskip

\textbf{Step 2.} Move to the next point of $\mathcal{C}^{(i,T)}$ in increasing time order. Repeat Steps 1 and 2 until \change{the status of $T$} is determined.

\bigskip

\noindent \textbf{Update step.}
\change{To simulate on $[0,t_{max}]$, go back to {\bf Step 0} of  the Backward procedure and replace the starting time of the initial step $T_0$ by $T$. Repeat the Backward and Forward procedures until  $T>t_{max}.$} 

\begin{remark}
If one wants to adapt the algorithm to the simulation of a generic  finite subset $F$ of ${\bf I}\times \mathds{R},$ it is sufficient to shift everything to the smallest time in $F$ (which will be the initial  $T_0$) and to repeat the process on all the $i$'s in $F$. Again in {\bf Step 2'}, we need to be careful to simulate only on new parts of ${\bf I}\times \mathds{R}$ and not parts that have been already discovered and where one of the  $N^i$ has already been simulated.
\end{remark}

\subsubsection{\change{Why  the Backward procedure ends after a finite number of steps}}\label{sec:end}
Our procedure only works if $N^{(i, T ) } < \infty $ almost surely. In what follows we give sufficient conditions implying this. To do so,  
\change{we compare our process to a spatio-temporal branching process. Let us define this more mathematically.}

\bigskip

\change{\underline{\it The initial directed graph of ancestors $\mathcal{T}$}}

 We start with the ancestor which is the point $ (i, T)$ constituting generation $0.$ All points belonging to $ \mathcal{C}_n^{(i, T ) } $ are called points of the $n-$th generation. For each point $(j,t )$ belonging to generation $n,$ all elements of 
$$\mathcal{C}_1^{(j, t ) }  = \bigcup\limits_{k \in \textbf{I}} \left\{(k,s) :  s \in  p_k((V^j_t)^{\rightarrow t})  \mbox{ is a jump of } N^k   \right\}$$ 
define the children of the point $(j,t).$ The choices of the children of $ (j, t) $ and $ (j', t') $ for any two distinct elements of generation $n$ are not necessarily independent, since the associated neighborhoods might overlap, that is, we may have $ V^j_t \cap V^{j'}_{t'}  \neq \emptyset . $ 

\change{Note that $\mathcal{T}$ is not a tree, precisely because of this potential overlapping: two different parents might have the same child.}

\bigskip

\change{\underline{\it The dominating branching process $\tilde{\mathcal{T}}$}}

To obtain sufficient conditions implying that $N^{(i, T ) } < \infty ,$ we therefore  construct a dominating spatio-temporal branching process starting from the same ancestor $(i,T),$ where the choices of children are independent. To go from generation $n$ to the next generation $n+1, $ to any point $(j,t ) $ belonging to generation $n$ we associate the same children $ \mathcal{C}_1^{(j, t ) } $ as before whenever the chosen neighborhood $V^j_t  $ does not overlap with parts of $ \bf{I} \times \mathds{R} $ where we have already simulated in previous steps. However, if there are parts of the neighborhood $V^j_t    $ that intersect with already visited parts, we  simulate -- independently of anything else --  on the whole neighborhood. By doing so, we make the number of children larger, but for each point, these choices are independent of anything else. If this dominating branching process goes extinct in finite time, then so does the original process $ \mathcal{C}_n^{(i, T)}, $ and the backward part of the algorithm terminates. 

\bigskip

To formulate a sufficient criterion that implies the almost sure extinction of the dominating branching process $\tilde{\mathcal{T}}$,  let us denote the product measure $P$ on $\textbf{I} \times \mathds{R}$, defined on the 
Borel subsets of $\textbf{I} \times \mathds{R},$ as follows.

$$
P(J \times A) := \sum_{j \in J} \Gamma^j \, \mu(A)  
$$
for any $J \subset \textbf{I}$, where $A$ is a Borel subset of $\mathds{R}$ and where $\mu$ is the Lebesgue measure. The following proposition is already proved in \cite{PMR} in a particular case where all the $\Gamma^j$'s are equal.

\begin{proposition} \label{prop:article-2/Stopping condition}
If \change{
\begin{equation} \label{nbenfant}
\sup_{i \in \bf{I}} \sum_{v \in \bf{V}^{i}} P(v) \lambda^i(v) =: \gamma < 1  ,
\end{equation}}
then the Backward steps in the Perfect Simulation  algorithm terminate almost surely in finite time.
\end{proposition}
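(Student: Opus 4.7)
The plan is to reduce the proof to a standard subcritical branching process argument, using the dominating process $\tilde{\mathcal{T}}$ that was already constructed in the excerpt. Since each point in $\mathcal{C}^{(i,T)}_n$ has no more offspring than the corresponding point in $\tilde{\mathcal{T}}$ (because in $\tilde{\mathcal{T}}$ we always resimulate on the whole neighborhood, whereas in the real procedure we may reuse points already identified in previous iterations), it is enough to show that $\tilde{\mathcal{T}}$ goes extinct in finite time almost surely.

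The first step is to compute the mean offspring number of a generic vertex $(j,t)$ in $\tilde{\mathcal{T}}$. Conditionally on the chosen neighborhood $V^j_t = v$, the children of $(j,t)$ are exactly the points of the independent Poisson processes $(N^k)_{k \in \bf{I}}$ falling in $v^{\rightarrow t}$. Since $N^k$ is a homogeneous Poisson process of intensity $\Gamma^k,$ this number has mean
\begin{equation*}
\sum_{k \in \bf{I}} \Gamma^k \mu(p_k(v^{\rightarrow t})) = \sum_{k \in \bf{I}} \Gamma^k \mu(p_k(v)) = P(v),
\end{equation*}
by translation invariance of Lebesgue measure. Unconditioning on $V^j_t$, the expected number of offspring of a vertex of type $j$ is
\begin{equation*}
\sum_{v \in \bf{V}^{j}} P(v)\lambda^j(v) \le \gamma,
\end{equation*}
uniformly in $j$ thanks to assumption \eqref{nbenfant}.

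The second step is to iterate. Writing $\tilde{\mathcal{C}}_n^{(i,T)}$ for the $n$-th generation of $\tilde{\mathcal{T}}$, and using the independence of the offspring simulations in $\tilde{\mathcal{T}}$ (the key feature that distinguishes it from $\mathcal{T}$), the tower property yields, by induction on $n$,
\begin{equation*}
\mathds{E}\bigl[|\tilde{\mathcal{C}}_n^{(i,T)}|\bigr] \le \gamma^n.
\end{equation*}
Summing over $n \ge 0$ and using $\gamma < 1$, the total expected population satisfies
\begin{equation*}
\mathds{E}\Bigl[\sum_{n \ge 0} |\tilde{\mathcal{C}}_n^{(i,T)}|\Bigr] \le \frac{1}{1-\gamma} < \infty,
\end{equation*}
so the total progeny of $\tilde{\mathcal{T}}$ is almost surely finite. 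In particular $|\tilde{\mathcal{C}}_n^{(i,T)}|$ vanishes for some finite $n$, hence so does $|\mathcal{C}_n^{(i,T)}|$ by the coupling, which gives $N^{(i,T)} < \infty$ almost surely.

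The main obstacle I anticipate is not the branching computation itself, which is classical, but rather making the coupling between $\mathcal{T}$ and $\tilde{\mathcal{T}}$ rigorous: one has to argue that replacing the real backward exploration (which does not resimulate on already visited portions of $\bf{I} \times \mathds{R}$, and in which two parents may share a child) by the process $\tilde{\mathcal{T}}$ (in which the offspring laws across different parents are independent) can only enlarge the population, so that domination holds generation by generation. Once this coupling is formalized, the subcriticality condition \eqref{nbenfant} is exactly what is needed to close the argument.
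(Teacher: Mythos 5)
Your proposal is correct and follows essentially the same route as the paper: both rely on the domination of $(\mathcal{C}_n^{(i,T)})_n$ by the independent branching process $\tilde{\mathcal{T}}$ constructed just before the proposition, compute the mean offspring number as $\sum_{v}P(v)\lambda^i(v)\le\gamma<1$ via the Poisson intensities, and conclude by subcriticality. The only difference is cosmetic: you spell out the $\gamma^n$ bound and the geometric series where the paper simply invokes the classical extinction result for subcritical Galton--Watson processes.
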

\change{
\begin{proof}
For any neighborhood $v$, we have
$$
\sum_{j \in v} \mathds{E} \left( \text{card} \left\{  t:  t \in  p_j(v)  \mbox{ is a jump of } N^j   \right\}\right)  =\sum_{j \in v}  \Gamma^j \mu ( p_j(v) ) =    P(v) .
$$
This implies that $\sum_{v \in \textbf{V}^{i}} P(v) \lambda^i(v) $ is the mean number of children issued from one point of type $i$. Then the condition \eqref{nbenfant} implies that the mean number of children is less than one in each step, which is the a classical sub-criticality condition for branching processes, see \cite{AN}.
The result then follows from the fact that according to the above discussion, we can dominate $L_n^{(i, T )} := \text{card} ( \mathcal{C}_n^{(i, T) } ) $ by a classical Galton Watson branching process having offspring mean $\gamma < 1$ which goes extinct in finite time almost surely. 
\end{proof}  }

\subsubsection{Why we sample from the stationary distribution}

\begin{thm}\label{theo:stationary}
\change{Suppose we are in the situation of Item 2. of Proposition \ref{prop:article-2/MethodToWriteKal} and the subspace $\mathcal{Y} $ is invariant under the dynamics, that is, the configuration associated to $Z$, $X$, satisfies that $X_t^{\leftarrow t} \in \mathcal{Y}$ 
 implies $X_{t+s}^{\leftarrow t+s} \in \mathcal{Y}$, for all positive $s$. If \eqref{nbenfant} holds,  then the process possesses a unique stationary distribution in restriction to $\mathcal{Y},$ and the accepted points of the forward procedure yield a perfect sample of this stationary distribution within the space-time window $ \{ i \} \times [0, t_{max}].$ 
}
\end{thm}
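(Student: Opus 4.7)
The plan is to combine three ingredients: (i) the finiteness of the clan of ancestors given by Proposition \ref{prop:article-2/Stopping condition}, (ii) the correctness of the intensity given by Proposition \ref{thinn_kali}, and (iii) a coupling-from-the-past argument in the spirit of Fernández-Ferrari-García that yields both stationarity and uniqueness.

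First, I would set up a probability space carrying, for each $j \in {\bf I}$, an independent homogeneous Poisson process $N^j$ of rate $\Gamma^j$ on $\mathds{R}$, independent i.i.d.\ marks $V^j_T \sim \lambda^j$ attached to each point $T$ of $N^j$, and independent i.i.d.\ uniform marks $U^j_T$ on $[0, \Gamma^j]$. All this randomness is time-translation invariant. By Proposition \ref{prop:article-2/Stopping condition}, condition \eqref{nbenfant} implies that the clan of ancestors $\mathcal{C}^{(i,T)}$ is almost surely finite, since it is dominated by a subcritical Galton–Watson-type branching process $\tilde{\mathcal{T}}$ with offspring mean $\gamma < 1$. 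In particular, $T^{(i,T)}$ defined in \eqref{eq:tit} is almost surely finite, which will be the key to making the coupling argument below work.

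Second, I would check that the forward acceptance rule produces a counting process with the correct intensity. The Backward/Forward construction is precisely an algorithmic implementation of the representation in Proposition \ref{thinn_kali}: each candidate point is a point of the Poisson measure $\Pi^j = N^j \otimes U^j \otimes V^j$ and is retained iff $U^j_T \le \phi^j_{V^j_T}(X^{\leftarrow T})$. The invariance assumption on $\mathcal{Y}$ ensures that as soon as the partial configuration $X$ belongs to $\mathcal{Y}$, subsequent accepted points keep it in $\mathcal{Y}$, so the Kalikow decomposition $\phi^j = \sum_v \lambda^j(v) \phi^j_v$ holds along the entire trajectory. Consequently, by Proposition \ref{thinn_kali}, the accepted point pattern on $\{i\} \times [0, t_{max}]$ has generic intensity $\phi^i$.

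Third, stationarity follows directly from the translation invariance of the underlying Poisson-mark system. If we translate the entire driving randomness by $s$, then the backward procedure launched at $(i, T+s)$ produces a clan that is the $s$-shift of the one launched at $(i,T)$, and the forward decisions transfer in the same way; hence the law of the accepted points in a space-time window depends only on the length of that window. For uniqueness, I would couple any stationary law $\mu$ supported on $\mathcal{Y}$ with the algorithmic construction by reusing the same $N^j, V^j_T, U^j_T$. Given any point $(i,T)$, the status of $T$ under $\mu$ must also satisfy the thinning relation $U^i_T \le \phi^i_{V^i_T}(X^{\leftarrow T})$, so an induction on the depth of the (a.s.\ finite) clan of ancestors, starting from its extremal elements whose chosen neighborhoods contain no points of $N$ and whose acceptance probability is therefore deterministic, shows that the status of $T$ is uniquely determined by the Poisson-mark randomness. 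This forces $\mu$ to coincide with the law produced by the algorithm.

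The main obstacle is the final step, namely justifying that any stationary version on $\mathcal{Y}$ can be recovered from the same backward-forward scheme. The subtlety is that we must avoid circularity: the acceptance decision for the extremal ancestors must really be independent of the past outside the clan. This is where the finiteness of $\mathcal{C}^{(i,T)}$ and the fact that the extremal ancestors have either an empty picked neighborhood (contributing $\phi^j_\emptyset$, which is cylindrical on $\emptyset$ and hence constant) or a non-empty picked neighborhood containing no Poisson points (again making $\phi^j_{V^j_t}$ evaluable without further information) is crucial, and it is precisely this feature of the continuous-time Kalikow decomposition that replaces the ``pick-the-empty-set'' mechanism of the discrete-time constructions in \cite{HL, GL2013}.
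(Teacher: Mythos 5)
Your argument is correct in outline, but it reaches the conclusion by a genuinely different route than the paper, mainly in how stationarity and uniqueness are established. The paper first builds $\mu$ by Kolmogorov extension from the consistent family of window laws $\mu_F$, and then runs a loss-of-memory argument: it introduces a \emph{modified} algorithm started from an arbitrary fixed past $x\in\mathcal{Y}$ before time $0$ (replacing the $N^i$ on $(-\infty,0)$ by the points of $x$), observes that the modified and unmodified outputs on $\{i\}\times[t,t+t_{max}]$ coincide on the event $\{T^{(i,T)}\le T\}$, and lets $t\to\infty$ so that $\mathds{P}(T^{(i,T)}>t)\to 0$; this gives convergence of the law started from any $x\in\mathcal{Y}$ to $\mu$, hence stationarity and, applied to a stationary initial law, uniqueness. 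You instead get shift-invariance directly from the translation invariance of the driving Poisson--mark system plus the correctness of the intensity (Proposition \ref{thinn_kali}), and you get uniqueness by showing that the acceptance status of every candidate point is an almost surely deterministic function of the driving noise, by induction from the extremal (leaf) ancestors of the a.s.\ finite clan. Both routes are legitimate; the paper's buys, as a by-product, convergence to equilibrium from arbitrary pasts in $\mathcal{Y}$, while yours is more economical for shift-invariance. The one step you leave implicit and that genuinely needs justification is the converse representation: to couple an \emph{arbitrary} stationary version with the algorithm you must first embed it as a thinning of a tridimensional Poisson measure with intensity $dt\,du\,\lambda^i(dv)$ (the analogue, in the reverse direction, of Proposition \ref{thinn_kali}); the paper only quotes the bivariate version of this embedding in Section \ref{thinning}, and extending it to the marked setting, while standard, is exactly the point where your ``the status of $T$ under $\mu$ must also satisfy the thinning relation'' would have to be proved rather than asserted. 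The paper's detour through the modified algorithm is designed precisely to avoid having to invoke that converse representation.
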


\begin{proof}\change{
In this proof we adapt the ideas of the proof of Theorem 2 in \cite{GGLO} to the present framework.
For any $F$ a finite subset of ${\bf I}\times\mathds{R}$ the Backward and Forward procedures produce a sample of a point process within the space-time window $ F,$ and we write $\mu_F$ for the law of the output. By construction, the family of probability laws $ \{ \mu_F, \,  F \subset  \bf{I} \times \mathds{R}  \mbox{ finite} \} $ is a consistent family of finite dimensional distributions. Hence there exists a unique probability measure $ \mu $ on $ ( \mathcal{X}_\infty, {\cal B} (\mathcal{X}_\infty))$ such that $ \mu_F $ is the projection onto $F$ of $\mu, $ for any fixed finite set $F \subset  \bf{I} \times \mathds{R}.$  

We show that $\mu$ is the unique stationary distribution of the process $Z $ within $\mathcal{Y}.$ 
In order to do so, we use a slight modification of our algorithm  in order to 
construct $Z$ starting from some fixed past, say $ x \in \mathcal{Y}, $ before time $0.$ The modification is defined
as follows. 

We fix $ t,  t_{max} > 0 $ and put $ F = \{ i \} \times [t, t + t_{max}].$  Recall that the original Backward procedure relies on the a priori realization of all the Poisson processes $ N^i $ on $ (- \infty , t + t_{max} ].$ In our modified procedure we replace, for any $ i \in \bf{I}, $ the points of $ N^i $ within $ ( - \infty , 0) $ by those of $ x^i, $ where $ x = (x^i )_{i \in \bf{I}}  $ is our initial condition. 

{\bf Step 0.} Put $T_0 = t.$ 

{\bf Step 1.} Perform {\bf Steps 1-3} of the Backward procedure replacing the Poisson processes $ N^i $ in restriction to $ ( - \infty , 0)$ by the corresponding points of $ x,$
and stop this procedure at time  
$$   \tilde N^{(i, T )}  = \inf \{ n : \mathcal{C}_n^{(i, T)} \subset {\bf{I}} \times (- \infty, 0) \} \wedge N^{(i, T )} .$$


Indeed, on the set $\{ \tilde N^{(i, T )}  <  N^{(i, T )}\}, $ at this time, only points having negative times have to be considered, and all these points are determined by the initial condition $x.$  In this modified version, when we stop the algorithm, the output set $\mathcal{C}_{\tilde N^{(i, T )} }^{(i, T )}$ might not be empty. This set is exactly the set of points before time $0$ that have an influence on the acceptance or rejection of the point $ ( i, T) .$ 

Notice that the time 
$$ \inf \{ n :  \exists (j, t ) \in \mathcal{C}_n^{(i, T ) } : (V^j_t)^{\rightarrow t }  \cap {\bf{I}} \times (- \infty, 0) \neq \emptyset \} ,$$
if it is finite, 
is the first time where the modified algorithm starts to be different from the original one. In particular, if the backward steps stop before reaching the negative sites, that is, if we are on the event $  T^{(i, T )} \leq T $ (recall \eqref{eq:tit}), then $\mathcal{C}_{\tilde N^{(i, T )} }^{(i, T )} = \emptyset, $ $ \tilde N^{(i, T )}  =   N^{(i, T )} ,$ and the two procedures produce the same sets of points. 

The forward procedure is performed as before, replacing the unknown points before time $0$ by the fixed past configuration $x.$ 

Then the law of the set $\{ ( \tilde \tau_n^i )  \} $ printed at
the end of the modified algorithm 2 is the law of $ Z^i_{| [ t, t+ t_{max}] } ,$ starting from the fixed past $x$ before time $0.$  The output of the modified algorithm  equals the output
of the unmodified perfect simulation algorithm 2 if $ T^{(i, T )} \leq T . $

We now give a formal argument proving that $\mu$ is indeed a stationary distribution of the process. Let $ f: \mathcal{X}_\infty \to \mathds{R}_+ $ be a bounded measurable function which is cylindrical on $ \{ i \} \times [t, t + t_{max}].$  
Then 
\begin{eqnarray}
 \mathds{E}[ f (Z^i_{| [ t, t+ tmax] } )| Z_{ | \mathds{R}_-} = x  ] &= &\mathds{E} [ f ( \{ ( \tilde \tau_n^i )  \} ), T^{(i, T )}\leq T ]
+ \mathds{E} [ f ( \{ ( \tilde \tau_n^i )  \} ),  T^{(i, T )} > T ] \nonumber \\
& =  &\mathds{E} [ f ( \{ (  \tau_n^i )  \} ), T^{(i, T )} \leq T  ] 
 + \mathds{E} [ f ( \{ ( \tilde \tau_n^i )  \} ),  T^{(i, T )} >  T ] ,
\end{eqnarray}
where $ (  \tau_n^i )$ is the output of the original Perfect simulation algorithm. 

But 
$$ \mathds{E} [ f ( \{ ( \tilde \tau_n^i )  \} ),  T^{(i, T )} >T    ]  \le \| f \|_{\infty} \mathds{P} ( T^{(i, T)} \geq t) \to 0 \mbox{ as } t \to \infty ,$$ 
since finiteness of the tree implies the finiteness of $T^{(i, T)} $ and since by shift invariance, the law of $T^{(i, T)} $ does not depend on $T.$ Hence we obtain that 
$$ \lim_{t \to \infty}  \mathds{E} [ f (Z^i_{| [ t, t+ t_{max}] } )| Z_{ | \mathds{R}_-} = x  ] = \mathds{E} [ f (Z^i_{| [ t, t+ t_{max}] } ) ] ,$$
since $ {\bf 1}_{ T^{(i, T )} \leq  T }   \to 1 $ almost surely. 

This implies that $\mu $ is a stationary distribution of the process. Replacing the initial condition $x $ by any stationary initial condition, chosen within $ \mathcal{Y},$ we finally get also uniqueness of the stationary distribution.} \end{proof}

\subsubsection{The complexity of the algorithm}
\change{Throughout this section we suppose once more that we are in the situation of Item 2. of Proposition \ref{prop:article-2/MethodToWriteKal} and that the subspace $\mathcal{Y} $ is invariant under the dynamics, that is, $ Z_t \in \mathcal{Y} $ implies  $Z_{t+s} \in \mathcal{Y} $ for all $ s \geq 0.$ 
Our goal is to study the effect of the choice of the $(\lambda_i(v))_{v \in \bf V^i } $ on the number of points simulated by our algorithm. Until the end of this section, we suppose moreover that }
\begin{assumption} \label{finite indices}~\\
1. The index set $\textbf{I} = \{ 1, \ldots , N \}$ is finite.\\
2. The sub-criticality assumption \eqref{nbenfant} is satisfied.
\end{assumption}
Let us fix several notations that will be useful in the sequel. We denote $e_i$ the $i$-th unit vector of $\mathds{R}^N$, $\textbf{1}$ is the vector $(1, 1, \ldots, 1)^T$ and $\mu$ still stands for the Lebesgue measure. Finally, by a positive vector, we mean that all its components are positive. 

\change{In the sequel we rely on the multitype branching process $\tilde{\mathcal{T}}$ introduced in the beginning of Section \ref{sec:end}  which is a space-time valued process starting from the ancestor $ (i, T ) $ in generation $ 0. $ In the sequel we refer to these points as ``particles'' and we say  the type of a particle is its associated index value $i.$ Recall that in the definition of this branching process, each particle $(j,t )$ belonging to generation $n,$ independently of anything else, gives rise to offspring particles which are chosen as independent copies of 
$$\mathcal{C}_1^{(j, t ) }  = \bigcup\limits_{k \in \textbf{I}} \left\{(k,s) :  s \in  p_k((V^j_t)^{\rightarrow t})  \mbox{ is a jump of a Poisson process of intensity }\Gamma^k  \right\}.$$
For $n \geq 1$, let $K^i (n)$ be the $N-$dimensional vector containing the numbers of offspring particles of different types belonging to the $n^{th}$ generation of the process issued from  $(i,T),$ that is, $ K^i (n) = ( K^i_1 (n), \ldots, K^i_N (n) )^T , $ where $K^i_k (n) $ is the number of particles of type $k $ within generation $n.$} We use the convention that $K^i (0)  = e_i$ for every $1 \le i \le N .$
\change{For every $j \in \{1, \ldots , N\},$ let
\begin{equation}\label{eq:xij}
 X^i_j := K^i_j(1)
\end{equation}
be the number of offspring particles of type $j$ of the initial particle $(i, T).$  } We have already seen that $X^i_j$ is the cardinal of the points that a Poisson process of intensity $\Gamma^j$ produces on $p_j(V^i_T),$ where $V^i_T$ is the random neighborhood chosen for particle $(i,T).$ In other words, if we denote $\mathcal{P}$ the Poisson distribution, given that $V^i_T= v$, we have
\begin{equation}\label{eq:condlaw}
\change{\mathcal{L}( X^i_j |V^i_T = v )}  =  \mathcal{P}\left(\Gamma^j \mu(p_j(v)) \right). 
\end{equation}

Denoting $X^i = K^1 (1) \in \mathds{R}^N $ the associated vector, we consider for any $\theta \in \mathds{R}^{N}$ the log-Laplace transform of $X^i$ given by 
\begin{equation*}
\phi_i(\theta) := \log \mathds{E}_{i} \left( e^{\theta^{T} X^i} \right)
\end{equation*}
where $\mathds{P}_{i}$ denotes the law of the branching process starting from a single ancestor having type  $i$ and $\mathds{E}_{i}$ is the corresponding expectation. 

%
%
%

Moreover, we consider
\begin{equation*}
W^i (n)  = \sum_{k =0}^{n} K^i(k) 
\end{equation*}
the total number of offspring particles within the first $n$ generations. The log Laplace transform associated to the random vector $W^i(n)$ is given by 
\begin{equation*}
\Phi^{(n)}_{i}(\theta) := \log \mathds{E}_{i}\left(e^{\theta^{T} W^i(n)} \right) , \mbox{ and we put  } \Phi^{(n)}(\theta) = (\Phi^{(n)}_{1}(\theta), \ldots, \Phi^{(n)}_{N}(\theta))^{T}.
\end{equation*} 
Since we are working under the sub-criticality condition \eqref{nbenfant},
\begin{equation}\label{eq:wiinfty}
W^i(\infty) := \lim_{n \to \infty} W^i(n) \mbox{ and  }W^i = \sum_{j=1}^N W^i_j (\infty),
\end{equation}
are well-defined and almost surely finite. In the above formula, $ W^i_j ( \infty )$ denotes the $j-$th coordinate of the $N-$dimensional vector $ W^i ( \infty) , $ that is, the total number of offspring of type $j$ issued from one ancestor particle of type $i.$. In particular, $W^i$ is the total number of offspring particles issued from one ancestor particle of type $i.$ We introduce the associated  log-Laplace transforms 
$$
\Phi_i(\theta):= \log \mathds{E}_{i}\left(e^{{\theta}^{T} W^i(\infty)} \right) , \; \Phi(\theta) = (\Phi_1(\theta), \Phi_2(\theta), \ldots, \Phi_N(\theta))^{T}.
$$

We are now going to state an exponential inequality, inspired by Lemma 1 of \cite{RRB}. \change{To do so, let  $\|.\|_\infty$ be the $\infty-$ norm on $\mathds{R}^N, $ that is, for $ x = (x_1, \ldots, x_N), \| x\|_\infty = \max \{ |x_i |, 1 \le i \le N\}, $} and denote the open balls having center $ 0 \in \mathds{R}^N$ and radius $r$ with respect to this norm by $B(0, r)$. We have to introduce an additional assumption stating that the log-Laplace transform $ \phi_i$ is finite within an open ball around $ 0 .$ 
\begin{assumption}\label{ass:finiteloglap}
There exists $R> 0 $ such that for all positive vectors $\theta$ belonging to  $B(0,R)$ we have
$$
\sup_i \phi_i(\theta) = \sup_{i} \sum_{j=1}^N  \log \left( \sum_{v \in \bf{V}^i } \lambda_i(v ) \exp{ \left[ (e^{\theta_j}- 1) \Gamma^j \mu\left(p_j(v) \right) \right]} \right) < \infty.
$$
\end{assumption}


\begin{proposition} \label{total ancestors} Grant Assumptions \ref{finite indices} and \ref{ass:finiteloglap}. Then the following holds. 
\begin{enumerate}
\item
There exists $ r \in (0, R ) $ such that for all $\theta \in B(0,r  )$, $ \Phi_i(\theta) < \infty$ and moreover 
$$
\Phi(\theta) =  \theta + \phi(\Phi(\theta)).
$$
\item
In particular, for $0 \le \vartheta < r ,$  there exists a constant $c_0$ that depends on $M$ and $i$ such that the following deviation inequality holds for the total number of offspring particles 
\begin{equation} \label{ConIneq}
\mathds{P}_i \left( W^i > \mathds{E}_i(W^i) + x \right) \leq c_0 e^{- \vartheta x} 
\end{equation}
for all $x >0$.
\end{enumerate}
\end{proposition}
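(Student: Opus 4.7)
The plan is to exploit the recursive branching structure to obtain a functional equation for $\Phi$, then apply Markov's inequality.

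First, I would establish the key recursive identity for $W^i(\infty)$. Conditioning on the first generation, the ancestor contributes $e_i$, and then each of the $X^i_j$ particles of type $j$ in generation $1$ is itself the root of an independent subtree whose total offspring has the same law as an independent copy of $W^j(\infty)$. Writing $\{W^{(j,k)}\}$ for these independent copies, we have
$$W^i(\infty) = e_i + \sum_{j=1}^N \sum_{k=1}^{X^i_j} W^{(j,k)}.$$
Taking the expectation of $e^{\theta^T \cdot}$ and using conditional independence of the subtrees given $X^i$,
$$e^{\Phi_i(\theta)} = e^{\theta_i}\, \mathds{E}_i\!\left[\prod_{j=1}^N \left(e^{\Phi_j(\theta)}\right)^{X^i_j}\right] = e^{\theta_i}\, \mathds{E}_i\!\left[ e^{\Phi(\theta)^T X^i} \right] = e^{\theta_i + \phi_i(\Phi(\theta))},$$
which yields the claimed fixed point equation $\Phi(\theta) = \theta + \phi(\Phi(\theta))$ componentwise, valid wherever both sides are finite.

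Next, to secure finiteness of $\Phi(\theta)$ on a neighborhood of $0$, I would run a monotone iteration on the finite-generation quantities $\Phi^{(n)}$. Using the branching recursion iterated $n$ times, one obtains $\Phi^{(n)}(\theta) = \theta + \phi(\Phi^{(n-1)}(\theta))$, starting from $\Phi^{(0)}(\theta) = \theta$. Since $\phi_i(0)=0$ and $\phi$ is analytic in its finite domain, with gradient at $0$ equal to the mean-matrix row $(M_{ij})_j$, the subcriticality assumption \eqref{nbenfant} gives $\|M\|_\infty = \sup_i \sum_j M_{ij} \leq \gamma < 1$. I would then choose $r\in (0,R)$ small enough that on the closed positive box $[0,\rho\mathbf{1}]$ for some $\rho \in(0,R)$ the map $u \mapsto \theta + \phi(u)$ is a contraction in $\|\cdot\|_\infty$ sending the box into itself whenever $\|\theta\|_\infty < r$. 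This yields a unique fixed point and shows $\Phi^{(n)}(\theta)\to \Phi(\theta)$ with $\Phi(\theta)<\infty$, first for positive $\theta$ in $B(0,r)$, and then for all $\theta \in B(0,r)$ by monotonicity of the Laplace transform in each coordinate of $W^i(\infty)$ (which has nonnegative entries), giving (1).

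For (2), I would apply Chernoff/Markov to the scalar $W^i = \mathbf{1}^T W^i(\infty)$. For any $0 \leq \vartheta < r$, the vector $\vartheta \mathbf{1}$ lies in $B(0,r)$, so $\mathds{E}_i[e^{\vartheta W^i}] = e^{\Phi_i(\vartheta\mathbf{1})} < \infty$. Also $\mathds{E}_i(W^i) = \mathbf{1}^T (I-M)^{-1}e_i<\infty$ by subcriticality. Then
$$\mathds{P}_i\!\left(W^i > \mathds{E}_i(W^i) + x\right) \leq e^{-\vartheta(\mathds{E}_i(W^i)+x)}\, \mathds{E}_i\!\left[e^{\vartheta W^i}\right] = c_0\, e^{-\vartheta x},$$
with $c_0 = \exp\bigl(\Phi_i(\vartheta\mathbf{1}) - \vartheta\, \mathds{E}_i(W^i)\bigr)$, proving \eqref{ConIneq}. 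The main obstacle is really step (1): one must argue carefully that the functional equation has a finite solution on an open ball, since $\phi$ is only known to be finite under Assumption \ref{ass:finiteloglap} and the iterates $\Phi^{(n)}$ could a priori blow up; the contraction argument based on the subcriticality constant $\gamma<1$ is what keeps the iterates inside the domain where $\phi$ is finite.
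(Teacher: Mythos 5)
Your proposal is correct and follows essentially the same route as the paper: conditioning on the first generation to get the recursion $\Phi^{(n)}(\theta)=\theta+\phi(\Phi^{(n-1)}(\theta))$, using $\|M\|_\infty\le\gamma<1$ from \eqref{nbenfant} to make $u\mapsto\theta+\phi(u)$ a contraction near $0$ so the iterates stay bounded and converge to a finite fixed point, and then applying Markov's inequality with $\theta=\vartheta\mathbf{1}$. The only cosmetic differences are that you state the fixed-point identity directly from the infinite-tree self-similarity before handling finiteness, and you write the Chernoff constant as $c_0=\exp\bigl(\Phi_i(\vartheta\mathbf{1})-\vartheta\,\mathds{E}_i(W^i)\bigr)$ where the paper expresses the same quantity via a second-order Taylor expansion of the log-Laplace transform.
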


\begin{remark}
Let $M$ be  the Jacobian matrix of $\phi(.)$ at $0.$  
Since
\begin{align*}
\phi_i(\theta) = \log \mathds{E}_{i} \left( e^{\theta^{T} X^i} \right)  = \log \mathds{E}_{i} \left( \prod_{j =1}^{N}e^{\theta_j X^i_j} \right),
\end{align*}
we deduce that, using \eqref{eq:condlaw}, 
\begin{equation} \label{MatrixM}
M_{ij}= \mathds{E}_i(X^i_j)= \sum_{v \in \bf{V}^i } \Gamma^j \mu\left(p_j(v) \right) \lambda_i(v) ,
\end{equation}
which is the mean number of offspring particles of type $j, $ issued by a particle of type $i.$ 
\change{It is a well-known fact in branching processes (see e.g. Chapter V of \cite{AN}) that  
$$
\mathds{E}_i \left( (K^i(n))^T  \right) = e_i^T  M^n ,
$$
and that
$$
\mathds{E}_i \left((W^i(n))^T  \right) = \mathds{E}_i \left( \sum_{k = 0}^{n} (K^i(k))^T \right) = e_i^T  \left( \sum_{k =0}^n  M^k \right).
$$
Thus by monotone convergence, }
\begin{equation}\label{impcomplex}
\mathds{E}_i(W^i) =e_i^T  \left( \sum_{k = 0 }^{\infty} M^k \right)  \textbf{1} ,
\end{equation}
which is finite by \eqref{nbenfant}.
\change{Hence \eqref{ConIneq} tells us that the number of points produced in the Backward steps is well concentrated around its mean. If we think that the overall complexity of the algorithm is governed by the number of points produced in the Backward steps, then \eqref{impcomplex} gives us a good way to determine which distribution $\lambda^i$ can lead to  the less costly algorithm. We are looking basically at $\lambda^i$'s that are minimizing \eqref{impcomplex}, with $M$ defined by  \eqref{MatrixM}.}
\end{remark}

\begin{proof}[Proof of Proposition \ref{total ancestors}]~\\

\noindent{\bf Step 1.}
First, we prove that $\phi_i(\theta)$ is well defined for all $\theta \in B(0,r)$. Indeed, since the $(X^i_j)_{j = 1, \ldots, N}$ are independent, we have
$$
\phi_i(\theta) = \log \mathds{E}_i \left( \prod_{j=1}^N \exp{ (\theta_j X^i_j )} \right) 
               = \sum_{j=1}^N \log \mathds{E}_i \left( \exp{ (\theta_j X^i_j) } \right).
$$
By \eqref{eq:condlaw}, 
$$
\mathds{E}_i \left( \exp{ (\theta_j X^i_j) } | V^i_T= v  \right) = \exp{ \left[ (e^{\theta_j}  - 1) \Gamma^j \mu\left(p_j(v) \right) \right]},
$$
and integrating with respect to the choice of $ V^i_T, $ we obtain 
$$
\phi_i(\theta)  =  \sum_{j=1}^N  \log \left( \sum_{v \in \bf {V}^i } \lambda_i(v) \exp{ \left[ (e^{\theta_j} - 1) \Gamma^j \mu\left(p_j(v) \right) \right]} \right) < \infty.
$$
{\bf Step 2.} 
In the following, we prove that $\Phi^{(n)}_{i}(\theta)$ satisfies the following recursion
\begin{equation}\label{eq:recursion}
\Phi^{(n)}_{i}(\theta)  = \theta^{T} K^i(0) + \phi_{i}(\Phi^{(n-1)}(\theta)) .
\end{equation}
Indeed, by definition of $W^i(n)$ we have
$$
\mathds{E}_{i}\left(e^{\theta^{T} W^i(n)} \right) = e^{\theta^{T} K^i(0)} \mathds{E}_{i} \left(e^{\theta^{T} \sum_{k=1}^{n} K^i(k)} \right).
$$
Now, let us introduce for any $ j $ and any $ 1 \le p \le X^i_j , $ $K_{(p)}^j (n-1)$ which is the vector of offspring particles within the $(n-1)^{th}$ generation,  issued from the $p$-th particle  of type $j$ in the first generation. Notice that by the branching property, for $p = 1, \ldots , X^i_j$, we have that the $K_{(p)}^j(n-1)$'s are independent copies of $K^{j}(n-1). $ Therefore, conditioning on the first generation offspring particles, 
\begin{multline*}
\mathds{E}_{i} \left(e^{\theta^{T} \sum_{k=1}^{n} K^i(k)} \right)  = \mathds{E}_{i} \left(e^{\theta^{T} \sum_{k=1}^{n} \sum_{j =1}^{N} \sum_{p =1}^{X^i_j}K_{(p)}^j(k-1)} \right) \\
= \mathds{E}_{i} \left(\prod_{j =1}^{N}  \prod_{p=1}^{X^i_j}  e^{\theta^{T} \sum_{k=1}^{n}  K_{(p)}^j(k-1)} \right) 
=\mathds{E}_{i} \left[ \mathds{E} \left( \prod_{j =1}^{N} \prod_{p=1}^{X^i_j} e^{\theta^{T} \sum_{k=1}^{n}   K_{(p)}^j(k-1)} | X^i \right)  \right].
\end{multline*}
\change{Since the $(K_{(p)}^j(k-1))_{1 \le j \le N} $ are independent and independent of $ X^i ,$ we obtain }
\begin{multline*}
\mathds{E}_{i} \left[ \mathds{E} \left( \prod_{j =1}^{N} \prod_{p=1}^{X^i_j} e^{\theta^{T} \sum_{k=1}^{n}   K_{(p)}^j(k-1)} | X^i \right)  \right]  
= \mathds{E}_{i} \left[  \prod_{j =1}^{N} \prod_{p=1}^{X^i_j} \mathds{E} \left( e^{\theta^{T} \sum_{k=1}^{n}K_{(p)}^j(k-1)} \mid X^i_j \right) \right] \\
=  \mathds{E}_{i} \left[  \prod_{j =1}^{N} \prod_{p=1}^{X^i_j} \mathds{E} \left( e^{\theta^{T} \sum_{k=1}^{n}K_{(p)}^j(k-1)} \right) \right] 
= \mathds{E}_{i} \left[  \prod_{j =1}^{N}  \left( \mathds{E} \left( e^{\theta^{T} W^j(n-1)} \right) \right)^{X^i_j} \right] \\
= \mathds{E}_{i} \left[  \prod_{j =1}^{N}  e^{\Phi^{(n-1)}_{j}(\theta) X^i_j} \right] =  \mathds{E}_{i} \left[ e^{\Phi^{(n-1)}(\theta)^{T} X_i} \right] = e^{\phi_{i}(\Phi^{(n-1)}(\theta))},
\end{multline*}
implying
\begin{equation*}
\Phi^{(n)}_{i}(\theta)  = \theta^{T} K^i(0) + \phi_{i}(\Phi^{(n-1)}(\theta)) ,
\end{equation*}
or, in vector form, 
\begin{equation} \label{eq6.1}
\Phi^{(n)}(\theta) = \theta+ \phi(\Phi^{(n-1)}(\theta)) .
\end{equation}
{\bf Step 3.} \change{Let us consider the sums of the elements of the $i-$th line of the  matrix $M,$
$$
\sum_{j=1}^N   M_{ij} = \sum_{j=1}^N  \mathds{E}_{i} (X^i_j) .$$
By definition, this is the mean number of offspring particles (of any type), issued from  a particle of type $i, $ and by the arguments presented in the proof of Proposition \ref{prop:article-2/Stopping condition}, this mean number is given by 
$$\sum_{j=1}^N   M_{ij}  = \sum_{v \in \bf {V}^i } \lambda^i(v) P(v) \le \gamma < 1,
$$
where the last upper bound holds by our assumptions. }Hence, $\|M\|_{\infty} = \sup_{\|x\|_\infty \leq 1} \{ \|Mx\|_\infty \}=  \sup_i \sum_{j =1}^N |M_{ij }|  \le \gamma < 1 ,$ where $\|.\|_\infty$ is the induced norm for matrices on $\mathds{R}^{N \times N}$. Therefore,  the derivative of $\phi$ in $0$ satisfies, $\|D\phi(0)\|_\infty \leq \gamma < 1$. Moreover, since the norm is continuous and $D\phi(s)$ is likewise, for any $ \gamma < C < 1,$ there is a $\tilde r \in (0, R), $ such that, for $||s||_\infty < \tilde r ,$
$$
\|D\phi(s)\|_\infty \leq C <1 .
$$
Hence, $\phi(s)$ is Lipschitz continuous in the ball $B(0,\tilde r)$ and moreover $\phi(0)=0$, which implies that
$$
\| \phi(s) \|_\infty \leq C \| s\|_\infty 
$$
for $\|s\|_\infty < \tilde r$.

Now, take  $\theta$ such that $| \theta_i | \leq \tilde r ( 1- C) $ for any $ 1 \le i \le N .$ 
\change{Using \eqref{eq6.1}, we can show by induction} that for all $n, $ 
\begin{equation*}
\|\Phi^{(n)}(\theta)\|_\infty \leq \|\theta\|_\infty (1 + C + \ldots + C^n) \leq \tilde r < \infty.
\end{equation*}
\change{Hence} by monotone convergence of $ \Phi^n ( \theta) \to \Phi ( \theta)  $ and  the limit in \eqref{eq6.1},  \change{we have} 
$$
\|\Phi (\theta)\|_\infty \leq r \mbox{ and }  \Phi(\theta) =  \theta + \phi(\Phi(\theta)),
$$
with $r =  \tilde r ( 1 - C).$ 
In particular, choosing $\theta = \vartheta \textbf{1} \in B(0,\change{r})$ with $\vartheta \in \mathds{R}$, we have 
$$
\mathds{E}_{i}\left(e^{\vartheta W^i} \right) < \infty ,
$$
where we recall that  $W^i$ is the total number of offspring particles of $ i.$ \\

\noindent{\bf Step 4.}
\change{
We use Markov's inequality and obtain for any $ \vartheta \in ( 0, r) ,$  
$$ \mathds{P}_i \left( W^i > \mathds{E}_i(W^i) + x \right) =  \mathds{P}_i \left( e^{ \vartheta W^i } > e^{ \vartheta \mathds{E}_i(W^i) + \vartheta x} \right) \le 
e^{ \tilde \Phi_i ( \vartheta) - \vartheta \mathds{E}_i(W^i) -  \vartheta x} , $$ 
where 
$ \tilde \Phi_i(\vartheta )= \log \mathds{E}_{i}\left(e^{{\vartheta} W^i} \right) .$ Using Taylor's formula, we have 
$$  \tilde \Phi_i ( \vartheta) =  \tilde \Phi_i ' (0) \vartheta + \frac12 \tilde \Phi_i^{''} ( \tilde \vartheta) \vartheta^2 = \mathds{E}_i(W^i) \vartheta + \frac12 \tilde \Phi_i^{''} ( \tilde \vartheta) \vartheta^2 ,$$
for some $ \tilde \vartheta \in (0, \vartheta).$ Choosing $c_0 =  \sup_{ \vartheta \le r } e^{ \frac12  \tilde \Phi_i^{''} ( \tilde  \vartheta) \vartheta^2} $ implies the assertion. }
\end{proof}

\subsubsection{Choice of the weights on a particular example}\label{sec:choice}
In this section we discuss a very particular example to show how to calibrate the choice of the $\lambda^i$'s in terms of minimizing the number of points produced int he Backward steps. Our example is the age dependent Hawkes process with hard refractory period $\delta>0$ defined by \eqref{eq: hard refractory period} and with non-decreasing $\psi^i$ which is $L$-Lipschitz for all $ i, $ so that Corollary \ref{Cor:age} Item 2. applies. The following choices are merely directed to have the simplest possible computations on a non trivial infinite case. 

In what follows, to simplify the computations, we consider that $\textbf{I} = \mathds{Z}$ and that $ L=1.$ Moreover, for all $i$, we set
\begin{enumerate}
\item $\psi^i(0)=1.$
\item $h^i_j(t) = \beta^i_j \exp{(- t/\delta )},$ 
\change{where} $\beta^i_j = \dfrac{1}{2 |j-i|^{\gamma}}$ for $j \neq i,$ \change{for some fixed  positive parameter} $\gamma>1,$ and $\beta^i_{i} = 1,$ where $ \beta_j^i \geq 0.$ 

Note that $\|h^i_j\|_1=\beta^i_j  \delta$ and that $h^i_j(0)=\beta^i_j.$
\item \change{With the notation of  Corollary \ref{Cor:age}, we use} $\omega^i_1=\{ i \} $ and $  \omega^i_k = \{ i-k+1, \ldots, i, i+1, \ldots, i+k-1 \}$  for all $ k \geq 2$.
\end{enumerate}
\change{With this choice, one can see that $\bar{\Gamma}^i_k$ defined in Corollary \ref{Cor:age} Item 2. is given for $k\geq 2$ by 
$$\bar{\Gamma}^i_k=\frac{2}{(k-1)^\gamma}+e^{-(k-1)} \left(1+\sum_{m=1}^{k-2} \frac{1}{m^\gamma}\right) .$$
Hence for some constant $C_\gamma>0$ depending on $\gamma , $ 
one can always choose for all $k\geq 1$
$$\Gamma^i_k \geq C_\gamma  k^{-\gamma},$$
as long as $\sum_k\Gamma^i_k<\infty$.
So let us take 
$$\Gamma^i_k =C_\gamma  k^{-p} ,$$
for some $1<p\leq \gamma$. Notice that in this case, $\Gamma^i$ is independent on $i$, since 
$$\Gamma^i=\sum_{k=1}^\infty \Gamma^i_k = C_\gamma c_p , \; \mbox{ where }  c_p = \sum_{k=1}^\infty k^{-p }.$$
By applying Proposition \ref{prop:article-2/MethodToWriteKal} Item 2. combined with Corollary \ref{Cor:age} Item 2, we obtain that 
$$\lambda^i(v^i_k)= (c_{p})^{-1} k^{-p}.$$
Now let us turn to finding $p$ such that the $\lambda^i$'s are minimizing \eqref{impcomplex}, that is 
 $$\mathds{E}_i(W^i) =e_i^T  \left( \sum_{k = 0 }^{\infty} M^k \right)  \textbf{1}$$
with $M$ defined by 
$$
M_{ij}= \mathds{E}_i(X^i_j)= \sum_{k=1}^\infty \Gamma^j \mu\left(p_j(v^i_k) \right) \lambda^i(v^i_k).$$
Summing over all possible types $j, $ we obtain the mean number of offspring particles of a particle of type $i, $ which is given by 
$$ M_i = \sum_{j =1}^\infty M_{ij }   =C_{\gamma} \delta  \sum_{k=1}^\infty   (2k - 1 ) k^{1-p} =: C_{\gamma} \delta f(p), \; \mbox{ where } f(p) =  \sum_{k=1}^\infty   (2k - 1 ) k^{1-p}.
$$
Clearly, $ f (p) < \infty $ if and only if $ p > 3,$ and thus, a fortiori, $\gamma > 3.$ Since $f$ is a decreasing function of $p,$ the optimal choice for $p$ is thus $p = \gamma .$ 
}

\section*{Acknowledgements}
This paper is dedicated to the Institute of Mathematics Hanoi, where \change{T.C. Phi} was honored to work for more than three years. \change{ We also warmly thank the referees of an earlier version of this paper for the very detailed comments that helped us to improve the manuscript}.

This work was supported by the French government, through the UCAJedi and 3IA Côte d’Azur Investissements d’Avenir managed by the National Research Agency (ANR-15- IDEX-01 and ANR- 19-P3IA-0002), by the CNRS through the "Mission pour les Initiatives Transverses et Interdisciplinaires" (Projet DYNAMO, "APP Modélisation du Vivant"), by the interdisciplinary Institute for Modeling in Neuroscience and Cognition (NeuroMod) of the Université Côte d’Azur,  and directly by the  National Research Agency (ANR-19-CE40-0024) with the ChaMaNe project. Moreover, this work is part of  the  FAPESP project Research, Innovation and Dissemination Center for Neuromathematics(grant 2013/07699-0)

\bibliographystyle{plain}
\bibliography{mybib-2} 
\end{document}